\documentclass[11pt]{article}
\usepackage{graphicx}
 \usepackage{mathptmx,color}      
\usepackage[titletoc,title]{appendix}
\usepackage[numbers]{natbib}
\usepackage[T1]{fontenc}
\usepackage[applemac]{inputenc}
\usepackage[all]{xy}
\usepackage{enumerate}
\usepackage{comment}
\usepackage{amsmath,amssymb,stmaryrd}
\usepackage[british]{babel}
\usepackage{a4wide}
\usepackage[sans]{dsfont}
\usepackage{amsfonts}
\usepackage{amsthm}
\usepackage[mathscr]{euscript}
\usepackage{graphicx}
\usepackage{mathrsfs}
\usepackage{latexsym}
\usepackage{setspace}
\usepackage{amsthm,amsmath}




\newcommand{\p}{^}

\newcommand{\wt}{\widetilde}
\newcommand{\wh}{\widehat}

	\newcommand{\rmd}{\mathrm{d}}

\newcommand{\Lrm}{\mathrm{L}}

\newcommand{\Var}{\mathrm{Var}}

\newcommand{\Ascr}{\mathscr{A}}
\newcommand{\Bscr}{\mathscr{B}}
\newcommand{\Cscr}{\mathscr{C}}
\newcommand{\Dscr}{\mathscr{D}}
\newcommand{\Escr}{\mathscr{E}}
\newcommand{\Fscr}{\mathscr{F}}
\newcommand{\Gscr}{\mathscr{G}}
\newcommand{\Hscr}{\mathscr{H}}

\newcommand{\Kscr}{\mathscr{K}}

\newcommand{\Oscr}{\mathscr{O}}
\newcommand{\Pscr}{\mathscr{P}}

\newcommand{\Rscr}{\mathscr{R}}

\newcommand{\Mloc}{\Hscr\p1_{\mathrm{loc}}}
\newcommand{\Hloc}{\Hscr_{\mathrm{loc}}}
\newcommand{\Gloc}{\Gscr_{\mathrm{loc}}}

\newcommand{\et}{\eta}

\newcommand{\al}{\alpha}
\newcommand{\bt}{\beta}

\newcommand{\sig}{\sigma}

\newcommand{\om}{\omega}
\newcommand{\Om}{\Omega}
\newcommand{\cadlag}{c\`adl\`ag\ }

\newcommand{\Bb}{\mathbb{B}}
\newcommand{\Dbb}{\mathbb{D}}
\newcommand{\Ebb}{\mathbb{E}}

\newcommand{\Fbb}{\mathbb{F}}

\newcommand{\Nbb}{\mathbb{N}}
\newcommand{\Pbb}{\mathbb{P}}
\newcommand{\Qbb}{\mathbb{Q}}
\newcommand{\Rbb}{\mathbb{R}}

\newcommand{\Hbb}{\mathbb{H}}
\newcommand{\Gbb}{\mathbb{G}}

\newcommand{\aPP}[2]{\ensuremath{\langle #1,#2 \rangle}}

\newtheorem{theorem}{Theorem}[section]
\newtheorem{lemma}[theorem]{Lemma}
\newtheorem{proposition}[theorem]{Proposition}
\newtheorem{corollary}[theorem]{Corollary}
\theoremstyle{definition}
\newtheorem{definition}[theorem]{Definition}
\newtheorem{examples}[theorem]{Examples}

\newtheorem{assumption}[theorem]{Assumption}

\newtheorem{remark}[theorem]{Remark}

\numberwithin{equation}{section}







\makeatletter
\def\timenow{\@tempcnta\time
\@tempcntb\@tempcnta
\divide\@tempcntb60
\ifnum10>\@tempcntb0\fi\number\@tempcntb
:\multiply\@tempcntb60
\advance\@tempcnta-\@tempcntb
\ifnum10>\@tempcnta0\fi\number\@tempcnta}
\makeatother

\title{On the Propagation of the Weak Representation Property in Independently Enlarged Filtrations: The General Case.}
\author{Paolo Di Tella\\ {\emph{E-Mail: }{\tt Paolo.Di\_Tella{\rm@}tu-dresden.de}}
     }
\date{}

\begin{document}
\maketitle

\begin{abstract}
In this paper we investigate the propagation of the weak representation property (WRP) to an independently enlarged filtration. More precisely, we consider an $\Fbb$-semimartingale $X$ possessing the WRP with respect to $\Fbb$ and an $\Hbb$-semimartingale $Y$ possessing the WRP with respect to $\Hbb$. Assuming that $\Fbb$ and $\Hbb$ are independent, we show that the $\Gbb$-semimartingale $Z=(X,Y)$ has the WRP with respect to $\Gbb$, where $\Gbb:=\Fbb\vee\Hbb$. In our setting, $X$ and $Y$ may have simultaneous jump-times. Furthermore, their jumps may charge predictable times. This generalizes all available results about the propagation of the WRP to independently enlarged filtrations.
\end{abstract}

{\noindent \textit{Keywords:}  Weak representation property, semimartingales, progressive enlargement of filtrations, independent semimartingales, random measures, stochastic integration.
}

{\noindent \textit{AMS Classification:  	60G44; 	60G57; 	60H05; 	60H30}}
\section{Introduction}  

Let $X$ be a $d$-dimensional semimartingale with respect to a right-continuous filtration $\Fbb$ and let $X\p c$ denote the continuous local martingale part of $X$. We say that $X$ possesses the \emph{weak representation property} (from now on WRP) with respect to $\Fbb$ if every $\Fbb$-local martingale can be represented as the sum of a stochastic integral with respect to $X\p c$ and a stochastic integral with respect to the compensated jump measure of $X$ (for details see Definition \ref{def:w.prp} below). 

The WRP of $X$ is a property depending on the filtration $\Fbb$: For example, if $X$ is a L\'evy process and $\Fbb=\Fbb\p X$ is the smallest right-continuous filtration with respect to which $X$ is adapted, then $X$ possesses the WRP with respect to $\Fbb\p X$. However, this need not be true if $X$ is considered with respect to a larger filtration. Therefore, it is natural to investigate under which conditions and in which form the WRP of a semimartingale $X$ with respect to a filtration $\Fbb$ propagates to a larger filtration $\Gbb$. 

In this paper we suppose that $\Fbb$ is enlarged by a right-continuous filtration $\Hbb$ and we denote by $\Gbb$ the smallest right-continuous filtration containing both $\Fbb$ and $\Hbb$. We assume that $\Hbb$ has the following properties: (1) $\Hbb$ is independent of $\Fbb$; (2) $\Hbb$ supports an $\Rbb\p\ell$-valued semimartingale $Y$ possessing the WRP with respect to $\Hbb$. Under these conditions, we show in Theorem \ref{thm:mar.rep.G} below (the main result of this paper) that the $\Rbb\p d\times\Rbb\p \ell$-valued $\Gbb$-semimartingale $Z=(X,Y)$ possesses the WRP with respect to $\Gbb$. We stress that, in the present paper, we do not make any further assumption: The semimartingales $X$ and $Y$ may have simultaneous jump-times or their jumps may charge predictable times. To the best of our knowledge, Theorem \ref{thm:mar.rep.G} below is the most general result about the propagation of the WRP to an independently enlarged filtration.

The propagation of the WRP to an independently enlarged filtration has been studied in Xue \cite{X93} under the further assumption that $X$ (or $Y$) are quasi-left continuous (i.e., the jumps do not charge predictable jump-times). At a first look, this assumption seems to be harmless and fairly general. On the other side, it leads to the following strong simplification of the problem: $\Fbb$-local martingales and $\Hbb$-local martingales have no common jumps. Contrarily, in the present paper we face the additional difficulty to determine an adequate representation of the simultaneous jumps of $\Fbb$- and $\Hbb$-local martingales. Moreover, examples of non-necessarily quasi-left continuous semimartingale possessing the WRP are known (see Example \ref{ex:exam} below) and, in this case, the propagation of the WRP to the independently enlarged filtration $\Gbb$ cannot be derived from \cite{X93}.
 
In Wu and Gang \cite{WG82}, under the independence assumption of $\Fbb$ and $\Hbb$, necessary and sufficient conditions on the semimartingale characteristics of $X$ and $Y$ are stated for the $\Gbb$-semimartingale $X+Y$ to possess the WRP with respect to $\Gbb$. In particular, the authors do not assume that $X$ (or $Y$) are quasi-left continuous, but only that the sets of their accessible jump-times are disjoint. This however again yields that $\Fbb$-local martingale and $\Hbb$-local martingale have no common jumps (see \cite[Lemma 7]{WG82}).

If $(X,\Fbb)$ and $(Y,\Hbb)$ are local martingales, a first work investigating martingale representation theorems in the independently enlarged filtration $\Gbb$, without quasi-left continuity assumptions and allowing simultaneous jumps of $X$ and $Y$, is Calzolari and Torti \cite{CT16}. However, \cite{CT16} deals with the propagation of the \emph{predictable representation property} (from now on PRP\footnote{We recall that a local martingale $X$ possesses the PRP with respect to a filtration $\Fbb$ if every $\Fbb$-local martingale is a stochastic integral with respect to $X$ of an $\Fbb$-predictable process.}) and not with the WRP. We recall that the WRP is a more general property than the PRP (see, e.g., \cite[Theorem 13.14]{HWY92} or Lemma \ref{prop:WRP.PRP} below). In Calzolari and Torti \cite{CT19}, the results of \cite{CT16} are extended to multidimensional local martingales.  We show in Corollary \ref{cor:srstr.rep} below, that the results obtained by \cite{CT16,CT19} can be reformulated in terms of the WRP of the $\Gbb$-local martingale $Z=(X,Y)$.

If the filtration $\Gbb$ is obtained enlarging the filtration $\Fbb$ by a non-necessarily independent filtration $\Hbb$, then very little is known about the propagation of the WRP. Results in this direction are available if $\Fbb$ is enlarged progressively by a random time $\tau$ that need not be an $\Fbb$-stopping time: In this case, $\Hbb$ is generated by the process $1_{[\tau,+\infty)}$ and, therefore, $\Gbb$ is the smallest right-continuous filtration containing $\Fbb$ and such that $\tau$ is a stopping time. We are now going to review these results.

In Barlow \cite{Ba78}, a semimartingale $X$ possessing the WRP with respect to a filtration $\Fbb$ is considered and a WRP is obtained in $\Gbb$ if $\tau$ is a \emph{honest time}. However, honest times are (morally) $\Fscr_\infty$-measurable random variables and, therefore, this excludes the independent enlargement. 

In Di Tella \cite{DT19}, the WRP in $\Gbb$ is obtained under the assumptions that $\Fbb$-martingales are $\Gbb$-martingales (that is, if the \emph{immersion property} holds) and $\Pbb[\tau=\sig<+\infty]=0$, for all $\Fbb$-stopping times $\sig$ (that is, if $\tau$ \emph{avoids} $\Fbb$ stopping times). If, from one side, the independent enlargement is a special case of the immersion property, on the other side, the avoidance of stopping times yields that $\tau$ cannot be charged by the jumps of $\Fbb$-local martingales. 

In summary, also if $\Fbb$ is progressively enlarged by a random time $\tau$, the case studied in the present paper cannot be covered by \cite{Ba78} nor by \cite{DT19}. 

The present work has the following structure: In Section \ref{sec:bas}, we recall some basic definitions ad results needed in this paper. In Section \ref{sec:prop.mar.rep}, we prove our main result (Theorem \ref{thm:mar.rep.G}) about the propagation of the WRP to the independently enlarged filtration $\Gbb$. Then, as a consequence of Theorem \ref{thm:mar.rep.G}, we also investigate the propagation of the PRP to $\Gbb$. Section \ref{sec:apl} is devoted to two applications of Theorem \ref{thm:mar.rep.G}: In Subsection \ref{subs:seq.en}, we study the propagation of the WRP to the \emph{iterated} independent enlargement. This part is inspired to \cite[\S 4.1]{CT19}. In Subsection \ref{subs:jeq}, we assume that the filtration $\Fbb$ is enlarged by a random time $\tau$ satisfying the so-called \emph{Jacod's equivalence hypothesis}. We stress that in Subsection \ref{subs:jeq} the filtrations $\Fbb$ and $\Hbb$ need not be independent. Therefore, Subsection \ref{subs:jeq} is an extension of Theorem \ref{thm:mar.rep.G}. We also observe that, in Subsection \ref{subs:jeq}, $\tau$ need not avoid $\Fbb$-stopping times. Hence, this part extends known results as Callegaro, Jeanblanc and Zargari \cite[Proposition 5.5]{CJZ13}, where the avoidance property is additionally assumed. Finally, we postpone to the Appendix the proof of some technical results.

\section{Basic Notions}\label{sec:bas}
In this paper we regard $d$-dimensional vectors as \emph{columns}, that is, if $v\in\Rbb\p d$, then $v=(v\p1,\ldots,v\p d)\p{tr}$, where $tr$ denotes the transposition operation and $v\p i\in\Rbb$. If $v\p i\in\Rbb\p {d_i}$, $d_i\geq1$, $i=1,\ldots,n$, we denote by $(v\p 1,v\p2,\ldots,v\p n)\p{tr}$ the $d_1+d_2+\ldots+d_n$-dimensional column-vector, obtained continuing with $v\p2$ after $v\p1$ and so on, till $v\p n$. 
  
\indent\textbf{Stochastic processes, filtrations and martingales.} 
Let $(\Om,\Fscr,\Pbb)$ be a complete probability space. 
For any \cadlag\  process $X$, we denote by $\Delta X$ the jump process of $X$, i.e., $\Delta X_t:=X_t-X_{t-}$, $t>0$, and $X_{0-}:=X_0$.

We denote by $\Fbb=(\Fscr_t)_{t\geq0}$ a right-continuous filtration and by $\Oscr(\Fbb)$ (resp.\ $\Pscr(\Fbb)$) the $\sig$-algebra of the $\Fbb$-optional (resp.\ $\Fbb$-predictable) sets of $\Om\times\Rbb_+$, $\Rbb_+:=[0,+\infty)$. We set $\Fscr_\infty:=\bigvee_{t\geq0}\Fscr_t$. We sometime use the notation $(X,\Fbb)$ to denote an $\Fbb$-adapted stochastic process $X$.

For a process $X$, we denote by $\Fbb\p X$ the smallest right-continuous filtration such that $X$ is adapted.

Let $X$ be an $[-\infty,+\infty]$-valued and $\Fscr\otimes\Bscr(\Rbb_+)$-measurable process, where $\Bscr(\Rbb)$ denotes the Borel $\sig$-algebra on $\Rbb$. We denote by ${}\p{p,\Fbb} X$ the (\emph{extended}) $\Fbb$-predictable projection of $X$. For the definition ${}\p{p,\Fbb} X$,
we refer to \cite[Theorem I.2.28]{JS00}.

An $\Fbb$-adapted \cadlag process $X$ is called \emph{quasi-left continuous} if $\Delta X_T=0$ for every finite-valued and $\Fbb$-predictable stopping time $T$.

For $q\geq1$, we denote by $\Hscr\p q(\Fbb)$ the space of $\Fbb$-uniformly integrable martingales $X$ such that $\|X\|_{\Hscr\p q}:=\Ebb[\sup_{t\geq0}|X_t|\p q]\p{1/q}<+\infty$. Recall that $(\Hscr\p q,\|\cdot\|_{\Hscr\p q})$ is a Banach space. For $X\in\Hscr\p2(\Fbb)$, we also introduce the equivalent norm $\|X\|_2:=\Ebb[X\p2_\infty]\p{1/2}$ and $(\Hscr\p 2,\|\cdot\|_{2})$ is a Hilbert space. 

For each $q\geq1$, the space $\Hscr\p q_\mathrm{loc}(\Fbb)$ is introduced from $\Hscr\p q(\Fbb)$ by localization. We observe that $\Hscr\p1_\mathrm{loc}(\Fbb)$ coincides with the space of \emph{all} $\Fbb$-local martingales (see \cite[Lemma 2.38]{J79}). We denote by $\Hscr\p p_0(\Fbb)$ (resp., $\Hscr\p q_{\mathrm{loc},0}(\Fbb)$) the subspace of martingales (resp., local martingales) $Z\in\Hscr\p q(\Fbb)$ (resp., $Z\in\Hscr\p p_\mathrm{loc}(\Fbb)$) such that $Z_0=0$. 

Two local martingales $X$ and $Y$ are called \emph{orthogonal} if $XY\in\Hscr\p1_{\mathrm{loc},0}(\Fbb)$ holds.
For $X,Y\in\Hscr\p2_\mathrm{loc}(\Fbb)$ we denote by $\aPP{X}{Y}$ the \emph{predictable covariation} of $X$ and $Y$. We recall that $XY-\aPP{X}{Y}\in\Hscr\p1_\mathrm{loc}(\Fbb)$. Hence, if $X_0Y_0=0$, then $X$ and $Y$ are orthogonal if and only if  $\aPP{X}{Y}=0$.

An $\Rbb$-valued $\Fbb$-adapted process $X$ such that $X_0=0$ is called \emph{increasing} if $X$ is \cadlag and the paths $t\mapsto X_t(\om)$ are non-decreasing, $\om\in\Om$. We denote by $\Ascr\p+=\Ascr\p+(\Fbb)$ the space of $\Fbb$-adapted integrable processes, that is, $\Ascr\p+$ is the space of increasing process $X$ such that $\Ebb[X_\infty]<+\infty$ (see \cite[I.3.6]{JS00}). We denote by $\Ascr\p+_\mathrm{loc}=\Ascr\p+_\mathrm{loc}(\Fbb)$ the localized version of $\Ascr\p+$.
For $X\in\Ascr\p+_\mathrm{loc}$, we denote by $X\p p\in\Ascr\p+_\mathrm{loc}$ the $\Fbb$-dual predictable projection of $X$ (see \cite[Theorem I.3.17]{JS00}). 

Let $(X,\Fbb)$ be an increasing process and let $K\geq0$ be an $\Fbb$-optional process. We denote
 by $K\cdot X=(K\cdot X_t)_{t\geq0}$ the process defined by the (Lebesgue--Stieltjes) integral of $K$ with respect to $X$, that is, $K\cdot
 X_t(\om):=\int_0\p t K_s(\om)\rmd X_s(\om)$, if $\int_0\p t K_s(\om)\rmd X_s(\om)$ is finite-valued, for every $\om\in\Om$ and $t\geq0$. Notice that $(K\cdot
 X,\Fbb)$ is an increasing process.

\textbf{Random measures.} Let $\mu$ be a nonnegative random measure on $\Rbb_+\times E$ in the sense of \cite[Definition II.1.3]{JS00}, where $E$ coincides with $\Rbb\p d$ or with a Borel subset of $\Rbb\p d$. We stress that we assume $\mu(\om,\{0\}\times E)=0$ identically. 

We denote by $\Bscr(E)$ the Borel $\sig$-algebra on $E$ and set $\wt\Om:=\Om\times\Rbb_+\times E$. We then introduce the following $\sig$-fields on $\wt\Om$: $\wt\Oscr(\Fbb):=\Oscr(\Fbb)\otimes\Bscr(E)$ and $\wt\Pscr(\Fbb):=\Pscr(\Fbb)\otimes\Bscr(E)$.

Let $W$ be an $\wt\Oscr(\Fbb)$-measurable (resp.\ $\wt\Pscr(\Fbb)$-measurable) mapping from $\wt\Om$ into $\Rbb$. We say that $W$ is an $\Fbb$-optional (resp.\ $\Fbb$-predictable) function. Let $W$ be an $\Fbb$-optional function. As in \cite[II.1.5]{JS00}, we define
\[
 W\ast\mu(\om)_t:=\begin{cases} \displaystyle\int_{[0,t]\times E}W(\om,t,x)\mu(\om,\rmd t,\rmd x),&\quad \textnormal{if } \displaystyle\int_{[0,t]\times E}|W(\om,t,x)|\mu(\om,\rmd t,\rmd x)<+\infty;\\\\
\displaystyle+\infty,&\quad\textnormal{else}.
\end{cases}
\]

We say that $\mu$ is an $\Fbb$-optional (resp.\ $\Fbb$-predictable) random measure if $W\ast\mu$ is an $\Fbb$-optional (resp.\ an $\Fbb$-predictable) process, for every optional (resp.\ $\Fbb$-predictable) function $W$.
\\[.1cm]\indent
\textbf{Semimartingales.} 
Let $X$ be an $\Rbb\p d$-valued $\Fbb$-semimartingale. We denote by $\mu\p X$ the jump measure of $X$, that is,
\[
\mu\p{X}(\om,\rmd t,\rmd x)=\sum_{s>0}1_{\{\Delta X_s(\om)\neq0\}}\delta_{(s,\Delta X_s(\om))}(\rmd t,\rmd x),
\]
where $\delta_a$ denotes the Dirac measure at point $a\in\Rbb\p d$. 
From \cite[Theorem II.1.16]{JS00}, $\mu\p X$ is an \emph{integer-valued random measure} with respect to $\Fbb$ (see \cite[Definition II.1.13]{JS00}). 

By $(B\p X, C\p X,\nu\p X)$ we denote the $\Fbb$-predictable characteristics of $X$ with respect to the truncation function $h(x)=1_{\{|x|\leq1\}}x$ (see \cite[Definition II.2.3]{JS00}). Recall that $\nu\p X$ is a predictable random measure characterized by the following properties: For any $\Fbb$-predictable mapping $W$ such that $|W|\ast\mu\p X\in\Ascr\p +_\mathrm{loc}$, we have $|W|\ast\nu\p X\in\Ascr\p +_\mathrm{loc}$ and $(W\ast\mu\p X-W\ast\nu\p X)\in\Hscr\p1_{\mathrm{loc},0}$ {(see \cite[Theorem II.1.8]{JS00})}.

We are now going to introduce the stochastic integral with respect to $(\mu\p X-\nu\p X)$  of an  $\Fbb$-predictable mapping  $W$.

Let $W$ be an $\Fbb$-predictable mapping. We define the process $\wt W\p X$ by
\begin{equation}\label{eq:def.wr}
\wt W_t\p X(\om):=W(\om,t,\Delta X_t(\om))1_{\{\Delta X_t(\om)\neq0\}}-\widehat W_t\p X(\om),
\end{equation}
where, for $t\geq0$,
\[
\widehat W_t\p X(\om):=\begin{cases} \displaystyle\int_{\Rbb\p d}W(\om,t,x)\nu\p X(\om,\{t\}\times\rmd x),&\quad \textnormal{if } \displaystyle\int_{\Rbb\p d}|W(\om,t,x)|\nu\p X(\om,\{t\}\times\rmd x)<+\infty;\\\\
\displaystyle+\infty,&\quad\textnormal{else}.
\end{cases}
\]
Notice that, according to \cite[Lemma II.1.25]{JS00}, $\widehat W\p X$ is predictable and a version of the predictable projection of the process $(\om,t)\mapsto W(\om,t,\Delta X_t(\om))1_{\{\Delta X_t(\om)\neq0\}}$. In symbols, denoting this latter process by $W(\cdot,\cdot,\Delta X)1_{\{\Delta X\neq0\}}$, we have  $\widehat W\p X={}^{p,\Fbb}(W(\cdot,\cdot,\Delta X)1_{\{\Delta X\neq0\}})$. So, since $\widehat W\p X$ depends on the filtration $\Fbb$ as well, we shall also write, if necessary, $\widehat W\p{X,\Fbb}$ and $\wt W\p{X,\Fbb}$ to stress the filtration. This notation will be especially used in Section \ref{sec:prop.mar.rep} below.

For $q\geq1$, we introduce (see \cite[(3.62)]{J79})
\[
\textstyle\Gscr\p q(\mu\p X):=\big\{W:\ W\textnormal{ is an } \Fbb\textnormal{-predictable function and }\ \big(\sum_{0\leq s\leq \cdot}(\wt W\p X)\p2_s\big)\p{q/2}\in \Ascr\p+\big\}.
\]
The definition of $\Gscr\p q_\mathrm{loc}(\mu\p X)$ is similar and makes use of $\Ascr\p+_\mathrm{loc}$ instead. To specify the filtration, we sometimes write $\Gscr\p q(\mu\p X,\Fbb)$. 
Setting
\[
\textstyle \|W\|_{\Gscr\p q(\mu\p X)}:=\Ebb\Big[\big(\sum_{s\geq 0}(\wt W\p X)\p2_s\big)\p{q/2}\Big]\p{1/q},
\] 
we get a semi-norm on $\Gscr\p q(\mu\p X)$. 

Let now $W\in\Gscr\p1_\mathrm{loc}(\mu\p X)$. The stochastic integral of $W$ with respect to $(\mu\p X-\nu\p X)$ is denoted by $W\ast(\mu\p X-\nu\p X)$ and is defined as the unique purely discontinuous local martingale $Z\in\Hscr\p1_\mathrm{loc,0}(\Fbb)$ such that $\Delta Z=\wt W\p X$ (up to an evanescent set). See \cite[Definition II.1.27]{JS00} and the subsequent comment. We recall that, according to \cite[Proposition 3.66]{J79}, the inclusion $W\ast(\mu\p X-\nu\p X)\in\Hscr\p q$ holds if and only if $W\in\Gscr\p q(\mu\p X)$, $q\geq1$.

For two $\Fbb$-semimartingales $X$ and $Y$, we denote by $[X,Y]$ the quadratic variation of $X$ and $Y$:
\[
[X,Y]_t:=\aPP{X\p c}{Y\p c}_t+\sum_{s\leq t}\Delta X_s\Delta Y_s,
\]
where $X\p c$ and $Y\p c$ denote the continuous local martingale part of $X$ and $Y$, respectively.
We recall that if $X,Y\in\Mloc(\Fbb)$, then $XY\in\Mloc(\Fbb)$ if and only if $[X,Y]\in\Mloc(\Fbb)$ holds. If furthermore $X,Y\in\Hscr\p2_\mathrm{loc}(\Fbb)$, then $[X,Y]\in\Ascr\p+_\mathrm{loc}(\Fbb)$ and $[X,Y]\p p=\aPP{X}{Y}$.
\paragraph{The stochastic integral for multidimensional local martingales.}Let us fix $q\geq1$ and consider an $\Rbb\p d$-valued stochastic process $X=(X\p1,\ldots,X\p d)\p{tr}$ such that $X\p i\in\Hloc\p q(\Fbb)$, $i=1,\ldots,d$. We denote by $a$ and $A$ the processes introduced in \cite[Chapter 4, Section 4\S a]{J79} such that $[X,X]=a\cdot A$. We recall that $A\in\Ascr\p+_\mathrm{loc}(\Fbb)$ and that $a$ is an optional process taking values in the space of $d$-dimensional symmetric and nonnegative matrices. Notice that, if $q=2$, then we can take $C=A\p p$ and $c$ is a predictable process taking values in the space of $d$-dimensional symmetric and nonnegative matrices such that $c\p{i,j}\cdot C=(a\p{i,j}\cdot A)\p p$. 

Let $K$ be an $\Rbb\p d$-valued measurable process and define $\|K\|_{\Lrm\p q(X)}=\Ebb[((K\p{tr}aK)\cdot A_\infty)\p{q/2}]$. We denote by $\Lrm\p q(X)$ the space of $\Rbb\p d$-valued and $\Fbb$-predictable processes $K$ such that $\|K\|_{\Lrm\p q(X)}<+\infty$. Notice that $K\in\Lrm\p q(X)$ if and only if $K$ is $\Rbb\p d$-valued, $\Fbb$-predictable and $((K\p{tr}aK)\cdot A)\p{q/2}\in\Ascr\p+$. The space $\Lrm\p{q}_\mathrm{loc}(X)$ is defined analogously but making use of $\Ascr\p+_\mathrm{loc}$ instead of $\Ascr\p+$.
For $K\in\Lrm_\mathrm{loc}\p1(X)$, we denote by $K\cdot X$ the stochastic integral of $K$ with respect to $X$. Recall that $K\cdot X\in\Hscr\p1_{\mathrm{loc},0}$ and that it is \emph{always} an $\Rbb$-valued process. Sometimes, to stress the underlying filtration, we write  $\Lrm\p q(X,\Fbb)$ or $\Lrm\p q_\mathrm{loc}(X,\Fbb)$. 

We observe that if $X\in\Mloc(\Fbb)$ (in particular, $X$ is $\Rbb$-valued), then we can chose $a=1$, $A=[X,X]$ and we get the usual definition of the stochastic integral with respect to $X$ (see \cite[Definition 2.46]{J79}). If furthermore $X$ is of finite variation and $K\in \Lrm_\mathrm{loc}\p1(X)$, then the stochastic integral $K\cdot X$ coincides with the Stieltjes-Lebesgue integral, whenever this latter one exists and is finite.

\section{Martingale Representation in the Independently Enlarged Filtration}\label{sec:prop.mar.rep}
We start this section introducing the notion of the \emph{weak representation property} (abbreviated by WRP).
\begin{definition}\label{def:w.prp} Let $\Fbb$ be a right-continuous filtration and $(X,\Fbb)$  an $\Rbb\p d$-valued semimartingale with continuous local martingale part $X\p c$ and predictable $\Fbb$-characteristics $(B\p X,C\p X,\nu\p X)$. We say that $X$ possesses the WRP with respect to $\Fbb$ if every $N\in\Hloc\p 1(\Fbb)$ can be represented as
\begin{equation}\label{eq:wprp}
N_t=N_0+K\cdot X\p c_t+W\ast(\mu\p X-\nu\p X)_t,\quad t\geq0,\quad K\in\Lrm_\mathrm{loc}\p 1(X\p c,\Fbb),\quad W\in\Gloc\p 1(\mu\p X,\Fbb).
\end{equation}
\end{definition}
For our aims, the following characterization of the WRP will be useful:
\begin{proposition}\label{prop:eq.wprp}
The $\Rbb\p d$-valued semimartingale $X$ possesses the \textnormal{WRP} with respect to $\Fbb$ if and only if every $N\in\Hscr\p2(\Fbb)$ has the representation
\begin{equation}\label{eq:sq.int.mar}
N_t=N_0+K\cdot X\p c_t+W\ast(\mu\p X-\nu\p X)_t,\quad t\geq0,\quad K\in\Lrm\p 2(X\p c,\Fbb),\quad W\in\Gscr\p 2(\mu\p X,\Fbb).
\end{equation}
\end{proposition}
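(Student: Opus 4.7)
The equivalence is a classical bootstrapping argument: WRP is stated in the weakest integrability class ($\Hloc^1$), whereas the $\Hscr^2$-version is the Hilbert-space formulation most convenient in computations. One direction upgrades integrability and the other localizes, and I would do them in turn.

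$(\Rightarrow)$ For this direction let $N\in\Hscr^2(\Fbb)\subseteq\Hloc^1(\Fbb)$. The WRP yields $K\in\Lrm^1_\mathrm{loc}(X^c,\Fbb)$ and $W\in\Gloc^1(\mu^X,\Fbb)$ with $N=N_0+K\cdot X^c+W\ast(\mu^X-\nu^X)$. Since $K\cdot X^c$ is continuous and $W\ast(\mu^X-\nu^X)$ is purely discontinuous, uniqueness of the decomposition of $N-N_0$ into its continuous and purely discontinuous local-martingale parts identifies $K\cdot X^c$ with $N^c-N_0\in\Hscr^2(\Fbb)$ and $W\ast(\mu^X-\nu^X)$ with $N^d\in\Hscr^2(\Fbb)$. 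The isometry $\|K\cdot X^c\|_2^2=\Ebb[\aPP{K\cdot X^c}{K\cdot X^c}_\infty]$ (valid because $X^c$ is continuous, so $[X^c,X^c]=\aPP{X^c}{X^c}$) then forces $K\in\Lrm^2(X^c,\Fbb)$, while \cite[Proposition 3.66]{J79}, already recalled in the excerpt, provides $W\in\Gscr^2(\mu^X,\Fbb)$.

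$(\Leftarrow)$ For the converse, let $N\in\Hloc^1(\Fbb)$ with $N_0=0$. To invoke the $\Hscr^2$-hypothesis I would first reduce to local $\Hscr^2$-martingales in two steps. Split $N=N'+V$, where $N'$ is the local martingale with $|\Delta N'|\leq1$ (hence $N'\in\Hscr^2_\mathrm{loc}(\Fbb)$) and $V$ is the compensated sum of the big jumps of $N$. Because $V$ is in general not locally square-integrable, I would further decompose it dyadically as $V=\sum_k V_k$, where $V_k$ collects the compensated jumps of $N$ with $|\Delta N|\in(2^{k-1},2^k]$; each $V_k$ has bounded jumps and lies in $\Hscr^2_\mathrm{loc}(\Fbb)$, with the series converging in a suitable local $\Hscr^1$-sense. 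It then suffices to exhibit the representation \eqref{eq:wprp} for an arbitrary $L\in\Hscr^2_\mathrm{loc}(\Fbb)$ and sum. For such an $L$, pick $T_n\uparrow\infty$ with $L^{T_n}\in\Hscr^2(\Fbb)$ and apply the hypothesis to obtain $K_n\in\Lrm^2(X^c,\Fbb)$, $W_n\in\Gscr^2(\mu^X,\Fbb)$ with $L^{T_n}=K_n\cdot X^c+W_n\ast(\mu^X-\nu^X)$. Stopping the equation for $L^{T_{n+1}}$ at $T_n$, the continuous/purely discontinuous uniqueness together with the injectivity of the maps $K\mapsto K\cdot X^c$ on $\Lrm^2(X^c,\Fbb)$ and $W\mapsto W\ast(\mu^X-\nu^X)$ on $\Gscr^2(\mu^X,\Fbb)$ (modulo the natural $\rmd[X^c,X^c]$- and $\rmd\nu^X$-a.e.\ equivalences) yields the consistency relations $K_{n+1}=K_n$ on $\lsi 0,T_n\rsi$ and $W_{n+1}=W_n$ on $\lsi 0,T_n\rsi\times\Rbb^d$. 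Pasting then produces $K\in\Lrm^1_\mathrm{loc}(X^c,\Fbb)$ and $W\in\Gloc^1(\mu^X,\Fbb)$ representing $L$ as in \eqref{eq:wprp}.

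The main obstacle I expect is the treatment of the big-jump part $V$: its jumps can be arbitrarily large, so $V$ is not locally square-integrable and a naive localization cannot bring it inside the hypothesis. The dyadic decomposition above circumvents this while staying within the scope of the $\Hscr^2$-representation, at the price of a careful $\sigma$-additive pasting of the integrands of the successive pieces; this in turn relies on the well-known uniqueness of the integrands for stochastic integrals against $X^c$ and against $(\mu^X-\nu^X)$.
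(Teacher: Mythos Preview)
Your $(\Rightarrow)$ direction is correct and is the natural integrability upgrade; this part matches what any argument would do (it is essentially Lemma~\ref{lem:multidim.stoc.int.itegrab} for the continuous piece together with \cite[Proposition~3.66]{J79} for the jump piece).

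For $(\Leftarrow)$ the paper proceeds differently and much more economically. Instead of trying to reach $\Hscr^2_\mathrm{loc}(\Fbb)$ --- which, as you correctly note, fails on the big-jump part --- it localises from $\Mloc(\Fbb)$ to $\Hscr^1(\Fbb)$; this reduction \emph{always} succeeds, by \cite[Lemma~2.38]{J79}, and never meets the obstruction that forces your dyadic splitting. The remaining equivalence between the $\Hscr^1$- and $\Hscr^2$-formulations is then simply quoted from \cite[Proposition~3.2]{DT19}. The content of that citation is the standard density/closedness argument: the set $\{K\cdot X^c+W\ast(\mu^X-\nu^X):K\in\Lrm^1(X^c),\,W\in\Gscr^1(\mu^X)\}$ is a closed stable subspace of $\Hscr^1_0(\Fbb)$, and $\Hscr^2_0(\Fbb)$ is dense in $\Hscr^1_0(\Fbb)$, so once the subspace contains $\Hscr^2_0$ it is all of $\Hscr^1_0$.

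Your dyadic route is not wrong, but it is a detour that does not actually avoid the abstract step. The ``careful $\sigma$-additive pasting'' of the integrands of the $V_k$ --- precisely the part you flag as the main obstacle --- when made rigorous requires exactly the $\Hscr^1$-closedness of the space of stochastic integrals that the density argument uses directly: one must show that the partial sums $\sum_{k\le K}V_k$ converge to $V$ locally in $\Hscr^1$ and then invoke closedness to manufacture the limiting $W\in\Gloc^1(\mu^X)$. So the dyadic scheme repackages, rather than bypasses, the structural input; the paper's route via $\Hscr^1$ reaches the same destination without the intermediate scaffolding.
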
 
\begin{proof}
By localization it is enough to show that \eqref{eq:sq.int.mar} holds if and only if every $N\in\Hscr\p1(\Fbb)$ can be represented as
\[
N_t=N_0+K\cdot X\p c_t+W\ast(\mu\p X-\nu\p X)_t,\quad t\geq0,\quad K\in\Lrm\p 1(X\p c,\Fbb),\quad W\in\Gscr\p 1(\mu\p X,\Fbb).
\]
But this is just \cite[Proposition 3.2]{DT19}. The proof is complete.
\end{proof}

\subsection{Propagation of the Weak Representation Property}\label{subsec:Weak-Strong}

Let $\Fbb=(\Fscr_t)_{t\geq0}$ and $\Hbb=(\Hscr_t)_{t\geq0}$ be right-continuous filtrations.  In this section we consider the filtration $\Gbb:=\Fbb\vee\Hbb$, that is, $\Gbb$ is the smallest filtration containing both $\Fbb$ and $\Hbb$. We then make the following assumption:
\begin{assumption}\label{ass:indep}
The filtrations $\Fbb$ and $\Hbb$ are independent.
\end{assumption}
We recall that two filtrations $\Fbb=(\Fscr_t)_{t\geq0}$ and $\Hbb=(\Hscr_t)_{t\geq0}$ are called independent, if the $\sig$-algebras $\Fscr_\infty$ and $\Hscr_\infty$ are independent.

As a first consequence of Assumption \ref{ass:indep}, we get that the filtration $\Gbb$ is again right-continuous (see Lemma \ref{lem:prop} (i)). Furthermore, the following result holds:
\begin{proposition}\label{prop:G.char}
Let $\Fbb$ and $\Hbb$ satisfy  Assumption \ref{ass:indep}. Let $(X,\Fbb)$ be an $\Rbb\p d$-valued semimartingale and let $(B\p X,C\p X,\nu\p X)$ denote the $\Fbb$-predictable characteristics of $X$. Then, $(X,\Gbb)$ is a semimartingale and the $\Gbb$-predictable characteristics of $X$ are again given by $(B\p X,C\p X,\nu\p X)$.
\end{proposition}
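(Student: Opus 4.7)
The plan is to reduce Proposition~\ref{prop:G.char} to the \emph{immersion} of $\Fbb$ into $\Gbb$, which is an easy consequence of Assumption~\ref{ass:indep}, and then to identify the $\Gbb$-characteristics by combining uniqueness of the canonical semimartingale decomposition with a monotone-class argument for the compensator of $\mu^X$.

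\textbf{Step 1 (immersion).} I would first show that every $M\in\Hscr^1_{\mathrm{loc}}(\Fbb)$ is a $\Gbb$-local martingale. For $M\in\Hscr^1(\Fbb)$, $s\le t$, and $A\in\Fscr_s$, $B\in\Hscr_s$, since $M_s,M_t$ are $\Fscr_\infty$-measurable and $\Hscr_\infty$ is independent of $\Fscr_\infty$,
\[
\Ebb[M_t 1_{A\cap B}]=\Pbb[B]\,\Ebb[M_t 1_A]=\Pbb[B]\,\Ebb[M_s 1_A]=\Ebb[M_s 1_{A\cap B}].
\]
The family $\{A\cap B : A\in\Fscr_s,\,B\in\Hscr_s\}$ is a $\pi$-system generating $\Gscr_s$ (which is right-continuous by Lemma~\ref{lem:prop}(i)), so $\Ebb[M_t\mid\Gscr_s]=M_s$. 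A standard localization argument extends the conclusion to $\Hscr^1_{\mathrm{loc}}(\Fbb)$.

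\textbf{Step 2 (candidate decomposition).} Starting from the $\Fbb$-canonical representation
\[
X = X_0+X^c+h\ast(\mu^X-\nu^X)+(x-h(x))\ast\mu^X + B^X,
\]
Step~1 turns $X^c$ and $h\ast(\mu^X-\nu^X)$ into $\Gbb$-local martingales, while $(x-h(x))\ast\mu^X$ is a $\Gbb$-adapted process of finite variation and $B^X$ is $\Fbb$-predictable, hence $\Gbb$-predictable. This already proves that $(X,\Gbb)$ is a semimartingale and yields the candidate triple $(B^X,C^X,\nu^X)$.

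\textbf{Step 3 (identification).} The continuous local martingale part of a semimartingale is intrinsic to $X$---it does not depend on the filtration---so both $X^c$ and $C^X=\langle X^c,X^c\rangle$ are unchanged. Since $X(h):=X-(x-h(x))\ast\mu^X$ is a $\Gbb$-special semimartingale, uniqueness of its canonical decomposition forces $B^X$ to be the first $\Gbb$-characteristic as well. The delicate step, and the one I expect to be the main obstacle, is showing that $\nu^X$ is the $\Gbb$-compensator of $\mu^X$: immersion alone yields the compensator identity only against $\Fbb$-predictable integrands, whereas the $\Gbb$-compensator is characterized by $\Ebb[W\ast\mu^X_\infty]=\Ebb[W\ast\nu^{X,\Gbb}_\infty]$ for \emph{every} nonnegative $\Gbb$-predictable function $W$. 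I would close this gap by a monotone-class argument on the $\pi$-system generating $\wt\Pscr(\Gbb)$ consisting of rectangles $]s,t]\times(A\cap B)\times C$ with $A\in\Fscr_s$, $B\in\Hscr_s$, $C\in\Bscr(\Rbb^d)$: on such sets the factor $\Pbb[B]$ splits off both sides by independence, reducing the identity to the $\Fbb$-compensator property of $\nu^X$. Monotone convergence then extends the equality to all nonnegative $\Gbb$-predictable $W$, so $\nu^X=\nu^{X,\Gbb}$ and the proof is complete.
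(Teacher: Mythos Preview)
Your argument is correct, but the paper's proof is a two-line affair: it invokes Lemma~\ref{lem:prop}(ii) (your Step~1) together with \cite[Theorem~II.2.21]{JS00}. That theorem characterizes the triplet $(B,C,\nu)$ through the local-martingale property of a family of processes built from $X$, the candidate triplet, and \emph{deterministic} test functions only (e.g.\ $X(h)-B$, products of its components minus the modified second characteristic, and $g\ast\mu^X-g\ast\nu$ for bounded Borel $g$ vanishing near~$0$). All of these are $\Fbb$-local martingales, so immersion carries them to $\Gbb$ in one stroke, and since $(B^X,C^X,\nu^X)$ is already $\Gbb$-predictable, the same theorem identifies it as the $\Gbb$-triplet. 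In particular, the ``delicate step'' you flag for $\nu^X$ evaporates: the characterization needs only deterministic $g(x)$, not general $\Gbb$-predictable $W(\omega,t,x)$, so your monotone-class argument---while correct---is unnecessary. One minor slip in your Step~3: the continuous local-martingale part of a semimartingale is \emph{not} intrinsic to $X$ in general (enlarge a Brownian filtration initially by $W_1$ and it changes); what actually pins down $X^c$ here is Step~1 together with uniqueness of the continuous martingale part in the $\Gbb$-decomposition.
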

\begin{proof}
The result follows from Lemma \ref{lem:prop} (ii) and \cite[Theorem II.2.21]{JS00}. The proof is complete.
\end{proof}
In the proof of Theorem \ref{thm:mar.rep.G} below, we need the following technical proposition: 
\begin{proposition}\label{prop:gilocG}
Let $\Fbb$ and $\Hbb$ satisfy  Assumption \ref{ass:indep}. Let $(X,\Fbb)$ be an $\Rbb\p d$-valued semimartingale with jump-measure $\mu\p X$ and $\Fbb$-predictable characteristics $(B\p X,C\p X,\nu\p X)$. Then, for every $W\in\Gloc\p1(\mu\p X,\Fbb)$ we have:

\textnormal{(i)} $\wh W\p{X,\Fbb}=\wh W\p{X,\Gbb}$.

\textnormal{(ii)} $\wt W\p{X,\Fbb}=\wt W\p{X,\Gbb}$.

\textnormal{(iii)} $W\in\Gloc\p1(\mu\p X,\Gbb)$. If moreover $W\in\Gscr\p q(\mu\p X,\Fbb)$, $q\geq1$, then  $W\in\Gscr\p q(\mu\p X,\Gbb)$.
\end{proposition}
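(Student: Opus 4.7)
The pivot of the proof is Proposition~\ref{prop:G.char}: the $\Fbb$- and $\Gbb$-predictable characteristics of $X$ agree, and in particular $\nu^{X,\Fbb} = \nu^{X,\Gbb}$ as random measures on $\Om\times\Rbb_+\times\Rbb^d$. This single identification is enough to drive all three claims.

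For (i), I would not argue directly via the abstract predictable projection (which is intrinsically filtration-dependent), but through the explicit formula recalled after \cite[Lemma II.1.25]{JS00}: namely,
\[
\wh W_t^{X,\Fbb}(\om) \;=\; \int_{\Rbb^d} W(\om,t,x)\,\nu^X(\om,\{t\}\times\rmd x),
\]
with the convention $+\infty$ where the integral fails to converge absolutely. The right-hand side depends on the filtration only through $\nu^X$, so Proposition~\ref{prop:G.char} yields $\wh W^{X,\Fbb} = \wh W^{X,\Gbb}$ pointwise on $\wt\Om$. Part (ii) is then immediate from the definition \eqref{eq:def.wr}: the map $(\om,t)\mapsto W(\om,t,\Delta X_t(\om))\,1_{\{\Delta X_t(\om)\neq 0\}}$ is defined pathwise and does not see the filtration, while the compensating term $\wh W^X$ has just been shown to be filtration-free.

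For (iii), observe that the process $\bigl(\sum_{0\le s\le\cdot}(\wt W^X)^2_s\bigr)^{q/2}$ is pathwise identical whether computed in $\Fbb$ or in $\Gbb$, thanks to (ii). Two trivial compatibilities complete the argument: first, $\Pscr(\Fbb)\subseteq\Pscr(\Gbb)$, so an $\Fbb$-predictable function is automatically $\Gbb$-predictable; second, every $\Fbb$-adapted process is $\Gbb$-adapted and every $\Fbb$-stopping time is a $\Gbb$-stopping time, giving the inclusions $\Ascr^+(\Fbb)\subseteq\Ascr^+(\Gbb)$ and $\Ascr^+_{\mathrm{loc}}(\Fbb)\subseteq\Ascr^+_{\mathrm{loc}}(\Gbb)$. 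Combining these inclusions with the pathwise equality of the integrand yields simultaneously $W\in\Gloc^1(\mu^X,\Gbb)$ and, when the stronger integrability $W\in\Gscr^q(\mu^X,\Fbb)$ holds, also $W\in\Gscr^q(\mu^X,\Gbb)$.

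The only potential obstacle — and it is a conceptual rather than a technical one — is resisting the temptation to manipulate $\wh W^X$ through its filtration-dependent definition as a predictable projection. As long as one uses the integral representation of $\wh W^X$ against $\nu^X(\{t\}\times\rmd x)$, the filtration enters only through $\nu^X$, and Proposition~\ref{prop:G.char} collapses the two versions into one. After that, everything reduces to the elementary observation that stopping times, adaptedness, and predictability behave monotonically under the enlargement $\Fbb\subseteq\Gbb$.
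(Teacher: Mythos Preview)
Your proof is correct and follows essentially the same route as the paper: both arguments reduce (i) to the fact that $\nu^X$ is unchanged under the enlargement (Proposition~\ref{prop:G.char}) and then read off $\wh W^{X}$ from its defining integral against $\nu^X(\{t\}\times\rmd x)$, after which (ii) and (iii) follow from the definitions. The paper compresses (iii) into ``follows immediately from (ii)'', whereas you spell out the predictability and $\Ascr^+_{\mathrm{loc}}$ inclusions, but there is no substantive difference.
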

\begin{proof}
First we notice that, by Proposition \ref{prop:G.char}, $X$ is a $\Gbb$-semimartingale. Hence, (ii) is a direct consequence of (i) and of the definition of $\wt W\p{X,\Gbb}$. Furthermore, (iii) follows immediately from (ii). We now show (i). Because of Proposition \ref{prop:G.char}, the $\Gbb$-predictable compensator of $\mu\p X$ is again given by $\nu\p{X}$. Therefore, by the definition of $\wh W\p{X,\Gbb}$ we have the identity $\wh W\p{X,\Gbb}=\wh W\p{X,\Fbb}$. The proof is complete.
\end{proof}

Let $(E,\Escr)$ be a measurable space endowed by the $\sig$-algebra $\Escr$. We denote by $\Bb(\Escr)$ the space of $\Rbb$-valued $\Escr$--$\Bscr(\Rbb)$-measurable and bounded functions on $E$. We now come to the main result of the present paper. 
\begin{theorem}\label{thm:mar.rep.G}
Let $(X,\Fbb)$ be an $\Rbb\p d$-valued semimartingale with jump-measure $\mu\p X$ and $\Fbb$-predictable characteristics $(B\p X,C\p X,\nu\p X)$. Let $(Y,\Hbb)$ be an $\Rbb\p \ell$-valued semimartingale with jump-measure $\mu\p Y$ and $\Hbb$-predictable characteristics $(B\p Y,C\p Y,\nu\p Y)$. Let furthermore $X$ have the \emph{WRP} with respect to $\Fbb$ and let $Y$ have the \emph{WRP} with respect to $\Hbb$. If $\Fbb$ and $\Hbb$ satisfy Assumption \ref{ass:indep}, then the $\Rbb\p{d}\times\Rbb\p\ell$-valued $\Gbb$-semimartingale $Z=(X,Y)\p{tr}$ possesses the \emph{WRP} with respect to $\Gbb$, where $\Gbb:=\Fbb\vee\Hbb$.
\end{theorem}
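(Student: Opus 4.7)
The plan is to verify, via the equivalent $L\p2$-characterisation in Proposition \ref{prop:eq.wprp}, that every $V\in\Hscr\p2_0(\Gbb)$ admits a representation $V=K\cdot Z\p c+U\ast(\mu\p Z-\nu\p Z)$ with $K\in\Lrm\p2(Z\p c,\Gbb)$ and $U\in\Gscr\p2(\mu\p Z,\Gbb)$. Let $\Lscr\subseteq\Hscr\p2_0(\Gbb)$ denote the closure of the linear span of all such integrals; this is a stable subspace, and the goal is to prove $\Lscr=\Hscr\p2_0(\Gbb)$. Since, by Assumption \ref{ass:indep} and a monotone-class argument, the products $\xi\eta$ with $\xi\in L\p\infty(\Fscr_\infty)$ and $\eta\in L\p\infty(\Hscr_\infty)$ are total in $L\p2(\Gscr_\infty)$, it suffices to show that $M$, $N$ and $MN$ belong to $\Lscr$ for every bounded $M\in\Hscr\p2_0(\Fbb)$ and $N\in\Hscr\p2_0(\Hbb)$.

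For the single factors, I would invoke the WRP of $X$ in $\Fbb$ together with Proposition \ref{prop:eq.wprp} to obtain $K_1\in\Lrm\p2(X\p c,\Fbb)$ and $W_1\in\Gscr\p2(\mu\p X,\Fbb)$ with $M=K_1\cdot X\p c+W_1\ast(\mu\p X-\nu\p X)$. Proposition \ref{prop:G.char} and Proposition \ref{prop:gilocG} transport this identity to $\Gbb$ with $K_1\in\Lrm\p2(X\p c,\Gbb)$ and $W_1\in\Gscr\p2(\mu\p X,\Gbb)$. Lifting to $Z$ by setting $\wt K_1:=(K_1,0)\p{tr}\in\Lrm\p2(Z\p c,\Gbb)$ and $\wt W_1(\om,t,x,y):=W_1(\om,t,x)1_{\{x\neq 0\}}$, which inherits the class $\Gscr\p2(\mu\p Z,\Gbb)$, I would obtain $M=\wt K_1\cdot Z\p c+\wt W_1\ast(\mu\p Z-\nu\p Z)\in\Lscr$; here I use that jumps of $Z$ with $\Delta X=0$ do not contribute to $\wt W_1\ast\mu\p Z$ and that the $\Gbb$-compensator of $\mu\p X$ is still $\nu\p X$ by Proposition \ref{prop:G.char}. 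The symmetric argument yields $N\in\Lscr$.

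For the product, independence makes $MN$ a bounded $\Gbb$-martingale, and integration by parts gives
\[
MN=M_-\cdot N+N_-\cdot M+[M,N].
\]
Inserting the $\Gbb$-representations of $M$ and $N$ and multiplying the predictable integrands by the bounded predictable processes $M_-$ and $N_-$ shows that the first two terms already belong to $\Lscr$. By Assumption \ref{ass:indep}, $M\p c$ and $N\p c$ are orthogonal in $\Gbb$, so $[M,N]_t=\sum_{s\leq t}\Delta M_s\Delta N_s$, a sum supported on the common (possibly predictable) jump times of $X$ and $Y$—exactly the new feature the theorem is allowing.

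The hard part will be to realise this simultaneous-jump sum as $U\ast(\mu\p Z-\nu\p Z)$ for some $U\in\Gscr\p2(\mu\p Z,\Gbb)$. Substituting $\Delta M_s=W_1(s,\Delta X_s)1_{\{\Delta X_s\neq 0\}}-(\wh W_1\p{X,\Fbb})_s$ and its analogue for $\Delta N_s$, the product $\Delta M_s\Delta N_s$ decomposes into four pieces: a pure $\mu\p Z$-contribution with integrand $W_1(\cdot,x)W_2(\cdot,y)1_{\{x\neq 0,y\neq 0\}}$; two mixed contributions in which $W_1(\cdot,x)1_{\{x\neq 0\}}$ (respectively $W_2(\cdot,y)1_{\{y\neq 0\}}$) is multiplied by the $\Gbb$-predictable process $\wh W_2\p Y$ (respectively $\wh W_1\p X$), where the identification of $\wh W_i\p{\cdot,\Fbb}$ with $\wh W_i\p{\cdot,\Gbb}$ comes from Proposition \ref{prop:gilocG}; and a purely predictable residual $\wh W_1\p X\,\wh W_2\p Y$ supported on times that are predictable jumps of both $X$ and $Y$. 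The decisive use of Assumption \ref{ass:indep} is that at such common predictable times $\Delta X_s$ and $\Delta Y_s$ are conditionally independent given $\Gscr_{s-}$, so that $\nu\p Z(\{s\}\times\rmd x\times\rmd y)=\nu\p X(\{s\}\times\rmd x)\otimes\nu\p Y(\{s\}\times\rmd y)$; this factorisation of $\nu\p Z$ produces exactly the compensator terms needed to reassemble the four pieces into a single integral against $(\mu\p Z-\nu\p Z)$, the square-integrability of the resulting $U$ being inherited from $W_1\in\Gscr\p2(\mu\p X)$ and $W_2\in\Gscr\p2(\mu\p Y)$. This gives $[M,N]\in\Lscr$, hence $MN\in\Lscr$, and the totality argument completes the proof.
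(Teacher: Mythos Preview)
Your overall architecture matches the paper's proof closely: reduce to products $\xi\eta$ with $\xi\in\Bb(\Fscr_\infty)$, $\eta\in\Bb(\Hscr_\infty)$, represent the associated bounded martingales $M$ and $N$ via the WRP of $X$ and $Y$, lift these representations to $\Gbb$ and to $Z$, apply integration by parts, and then argue that each of $M_-\cdot N$, $N_-\cdot M$ and $[M,N]$ lands in the representable space. The paper does exactly this (its Steps~1--6), so on strategy you are fine.

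The substantive divergence, and the place where your argument has a gap, is the treatment of $[M,N]$. You assert that at a $\Gbb$-predictable time the jumps $\Delta X$ and $\Delta Y$ are conditionally independent given $\Gscr_{\cdot-}$, and then deduce a product factorisation $\nu\p Z(\{s\}\times\rmd x\times\rmd y)=\nu\p X(\{s\}\times\rmd x)\otimes\nu\p Y(\{s\}\times\rmd y)$ which yields the compensator terms needed. But this conditional independence is not an immediate consequence of Assumption~\ref{ass:indep}: a $\Gbb$-predictable time $T$ generally depends on \emph{both} filtrations, so $\Delta X_T$ and $\Delta Y_T$ are evaluated at a random time that mixes $\Fbb$- and $\Hbb$-information, and conditional independence given $\Gscr_{T-}$ is exactly the nontrivial statement at stake. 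You have not indicated how to prove it, and in fact the cleanest proof goes through the very martingale property you are trying to establish.

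The paper sidesteps this entirely. Independence is used once, at the level of Lemma~\ref{lem:prop}~(iii), to see that $MN$ (and hence $[M,N]$) is already a $\Gbb$-martingale. From ${}\p{p,\Gbb}(\Delta[M,N])=0$ one then reads off directly, by expanding $\wt W\p{X,\Gbb}\wt V\p{Y,\Gbb}$, that $\wh W\p{X,\Gbb}\wh V\p{Y,\Gbb}$ is a version of ${}\p{p,\Gbb}\big(W(\cdot,\Delta X)V(\cdot,\Delta Y)1_{\{\Delta X\neq0,\Delta Y\neq0\}}\big)$. This is precisely the ``product formula'' you need for $\wh U\p{Z}$, obtained without ever computing $\nu\p Z$ or invoking conditional independence of the jumps. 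With this identity in hand, the explicit integrand $U$ in \eqref{eq:def.int.U} satisfies $\wt U\p{Z,\Gbb}=\wt W\p{X,\Gbb}\wt V\p{Y,\Gbb}=\Delta[M,N]$, and one concludes $[M,N]=U\ast(\mu\p Z-\nu\p Z)$ by matching jumps of purely discontinuous local martingales. So the step you flagged as ``the decisive use of independence'' is in the paper the \emph{a priori} martingale property of $[M,N]$, not a structural statement about $\nu\p Z$; your proposed route would need an independent justification of the conditional-independence claim, and the natural one is circular.
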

\begin{proof}
Let $\xi$ belong to $\Bb(\Fscr_\infty)$ and let $\et$ belong to $\Bb(\Hscr_\infty)$. We consider the martingales $M\in\Hscr\p2(\Fbb)$ and $N\in\Hscr\p2(\Hbb)$ such that $M_t=\Ebb[\xi|\Fscr_t]$ and $N_t=\Ebb[\et|\Hscr_t]$ a.s.\ for every $t\geq0$. Since $X$ has the WRP with respect to $\Fbb$ and $Y$ has the WRP with respect to $\Hbb$, because of Proposition \ref{prop:eq.wprp}, we can represent $M$ as 
\begin{equation}\label{eq:rep.Y}
M_t=M_0+K\cdot X_t\p c+W\ast(\mu\p X-\nu\p X)_t,\quad t\geq0,\quad K\in\Lrm\p2(X\p c,\Fbb),\quad W\in\Gscr\p2(\mu\p X,\Fbb)
\end{equation} 
and $N$ as
\begin{equation}\label{eq:rep.Z}
N_t=N_0+J\cdot Y_t\p c+V\ast(\mu\p Y-\nu\p Y)_t,\quad t\geq0,\quad J\in\Lrm\p2(Y\p c,\Hbb),\quad V\in\Gscr\p2(\mu\p Y,\Hbb).
\end{equation}
Furthermore, because Lemma \ref{lem:prop} (ii) and (iii), the processes $M$, $N$ and $MN$ are $\Gbb$-martingales and they are bounded. Hence, we have, in particular, $M,N,MN\in\Hscr\p2(\Gbb)$. By Proposition \ref{prop:G.char},  $(X,\Gbb)$ and $(Y,\Gbb)$ are semimartingales and their $\Gbb$-predictable characteristics are given by $(B\p X,C\p X,\nu\p X)$ and $(B\p Y,C\p Y,\nu\p Y)$, respectively. 
In conclusion, \eqref{eq:rep.Y} and \eqref{eq:rep.Z} are also valid in $\Gbb$. We can then apply the formula of integration by parts with respect to the enlarged filtration $\Gbb$ and use \eqref{eq:rep.Y}, \eqref{eq:rep.Z}, \cite[Proposition II.1.30 b) and III.4.9 in Theorem III.4.5]{JS00} to obtain
\begin{equation}\label{eq:par.int}
\begin{split}
M_tN_t&=M_0N_0+N_-\cdot M_t+M_-\cdot N_t+[M,N]_t
\\&=M_0N_0+N_-K\cdot X\p c_t+N_-W\ast(\mu\p X-\nu\p X)_t+M_-J\cdot Y\p c_t+M_-V\ast(\mu\p Y-\nu\p Y)_t+[M,N]_t.
\end{split}
\end{equation}
We split the remaining part of the proof in several steps.
\paragraph*{Step 1: Representation of $[M,N]$.} 
By Lemma \ref{lem:prop} (ii), the process $M\p cN\p c$ is a continuous local martingale and hence $\aPP{M\p c}{N\p c}=0$. By definition of the quadratic co-variation, from Proposition \ref{prop:gilocG} (ii) and the identities \eqref{eq:rep.Y} and \eqref{eq:rep.Z}, we therefore get
\begin{equation}\label{eq:rep.sq.br}
[M,N]_t=\sum_{0\leq s\leq t}\wt W\p{X,\Fbb}_s\wt V\p{Y,\Hbb}_s=\sum_{0\leq s\leq t}\wt W\p{X,\Gbb}_s\wt V\p{Y,\Gbb}_s.
\end{equation}
Since $MN\in\Hscr\p2(\Gbb)$, we have that $S:=[M,N]\in\Mloc(\Gbb)$. Moreover,  Kunita--Watanabe's inequality (see \cite[Theorem 8.3, Eq.\ (3.1)]{HWY92}) and  the independence of $\Fbb$ and $\Hbb$ yield 
\[\begin{split}
\Ebb\bigg[\sup_{t\geq0}\big|[M,N]_t\big|\p2\bigg]&\leq\Ebb\big[\Var\big([M,N]\big)_\infty\p2\big]\leq\Ebb\big[[M,M]_\infty[N,N]_\infty\big]=\Ebb\big[[M,M]_\infty\big]\Ebb\big[[N,N]_\infty\big]<+\infty,
\end{split}\]
meaning that the $\Gbb$-local martingale $S$ belongs to $\Hscr\p2(\Gbb)$ as well. By the definition of $S$ and Proposition \ref{prop:gilocG} (ii), we have 
\begin{equation}\label{eq:ju.Z}
\Delta S=\wt W\p{X,\Fbb}\wt V\p{Y,\Hbb}=\wt W\p{X,\Gbb}\wt V\p{Y,\Gbb}.
\end{equation}
By the definition of $\wt W\p{X,\Gbb}$ and $\wt V\p{Y,\Gbb}$ and Proposition \ref{prop:gilocG} (i), we have
\begin{equation}\label{eq:exp.for.prod}
\begin{split}
\wt W\p{X,\Gbb}\wt V\p{Y,\Gbb}&=\wt W\p{X,\Fbb}\wt V\p{Y,\Hbb}=\Big(W(\cdot,\cdot,\Delta X)1_{\{\Delta X\neq0\}}-\wh W\p{X,\Gbb}\Big)\Big(V(\cdot,\cdot,\Delta Y)1_{\{\Delta Y\neq0\}}-\wh V\p{Y,\Gbb}\Big)
\\&=W(\cdot,\cdot,\Delta X)V(\cdot,\cdot,\Delta Y)1_{\{\Delta X\neq0,\Delta Y\neq0\}}-\wh W\p{X,\Gbb}V(\cdot,\cdot,\Delta Y)1_{\{\Delta Y\neq0\}}\\&\qquad-\wh V\p{Y,\Gbb}W(\cdot,\cdot,\Delta X)1_{\{\Delta X\neq0\}}+\wh V\p{Y,\Gbb}\wh W\p{X,\Gbb}.
\end{split}
\end{equation}
We are now going to compute the $\Gbb$-predictable projection of $\wt W\p{X,\Gbb}\wt V\p{Y,\Gbb}$. We recall that the processes $\wh W\p{X,\Gbb}$ and $\wh V\p{Y,\Gbb}$ are $\Gbb$-predictable, by \cite[Lemma II.1.25]{JS00}. Furthermore, since $\wt W\p{X,\Gbb}\in\Gscr\p2(\mu\p X,\Gbb)$ and  $\wt V\p{Y,\Gbb}\in\Gscr\p2(\mu\p Y,\Gbb)$ by Proposition \ref{prop:gilocG} (iii), we also have that $\wh W\p{X,\Gbb}$ and $\wh V\p{Y,\Gbb}$ are finite valued. By the $\Gbb$-martingale property of $S$, we have ${}\p{p,\Gbb}{(\Delta S)}=0$ (see \cite[Corollary I.2.31]{JS00}). Therefore, using \eqref{eq:ju.Z}, \cite[Theorem I.2.28 c), Corollary I.2.31 and Lemma II.1.25]{JS00} in \eqref{eq:exp.for.prod}, since all involved processes are finite-valued, we can compute
\[
\begin{split}
0={}\p{p,\Gbb}(\wt W\p{X,\Gbb}\wt V\p{Y,\Gbb})&={}\p{p,\Gbb}\big(W(\cdot,\cdot,\Delta X)V(\cdot,\cdot,\Delta Y)1_{\{\Delta X\neq0,\Delta Y\neq0\}}\big)-{}\p{p,\Gbb}\big(V(\cdot,\cdot,\Delta Y)1_{\{\Delta Y\neq0\}}\big)\wh W\p{X,\Gbb}\\&\qquad-{}\p{p,\Gbb}\big(W(\cdot,\cdot,\Delta X)1_{\{\Delta X\neq0\}}\big)\wh V\p{Y,\Gbb}+\wh W\p{X,\Gbb}\wh V\p{Y,\Gbb}
\\&={}\p{p,\Gbb}\big(W(\cdot,\cdot,\Delta X)V(\cdot,\cdot,\Delta Y)1_{\{\Delta X\neq0,\Delta Y\neq0\}}1_{\{\Delta Z\neq0\}}\big)-\wh W\p{X,\Gbb}\wh V\p{Y,\Gbb},
\end{split}
\]
which yields
\begin{equation}\label{eq:proj}
\wh W\p{X,\Gbb}\wh V\p{Y,\Gbb}\quad\textnormal{\ is a version of\ }\quad {}\p{p,\Gbb}\big(W(\cdot,\cdot,\Delta X)V(\cdot,\cdot,\Delta Y)1_{\{\Delta X\neq0,\Delta Y\neq0\}}1_{\{\Delta Z\neq0\}}\big).
\end{equation}
We set $F(\om,t,x,y):= W(\om,t,x)V(\om,t,y)1_{\{x\neq0,y\neq0\}}$. Then, $F$ is $\Pscr(\Gbb)\otimes\Bscr(\Rbb\p d)\otimes(\Rbb\p\ell)$-measurable. By \cite[Lemma II.1.25]{JS00} and \eqref{eq:proj}, we get that $\wh W\p{X,\Gbb}\wh V\p{Y,\Gbb}$ is a version of $\wh {F}\p{\,Z,\Gbb}$. We now define
\begin{equation}\label{eq:def.int.U}
U(\om,t,x,y):=F(\om,t,x,y)-\wh W\p{X,\Gbb}_t(\om)V(\om,t,y)1_{\{y\neq0\}}-\wh V\p{Y,\Gbb}_t(\om)W(\om,t,x)1_{\{x\neq0\}}
\end{equation}
and clearly have
\[
\begin{split}
U(\om,t,\Delta X_t(\om),\Delta Y_t(\om))1_{\{\Delta Z_t(\om)\neq0\}}&=
W(\om,t,\Delta X_t(\om))V(\om,t,\Delta Y_t(\om))1_{\{\Delta X_t(\om)\neq0,\Delta Y_t(\om)\neq0\}}\\&-\wh W\p{X,\Gbb}_t(\om)V(\om,t,\Delta Y_t(\om))1_{\{\Delta Y_t(\om)\neq0\}}\\&-\wh V\p{Y,\Gbb}_t(\om)W(\om,t,\Delta X_t(\om))1_{\{\Delta X_t(\om)\neq0\}}.
\end{split}
\]
From this, by \cite[Theorem I.2.28 c)]{JS00} and \eqref{eq:proj}, we see that $-\wh W\p{X,\Gbb}\wh V\p{Y,\Gbb}$ is a version of the $\Gbb$-predictable projection of the process $ U(\cdot,\cdot,\Delta X,\Delta Y)1_{\{\Delta Z\neq0\}}$. Hence, by \cite[Lemma II.1.25]{JS00}, we obtain
\begin{equation}\label{eq:Util}
\begin{split}
\wt U\p{Z,\Gbb}_t(\om)&:=U(\om,t,\Delta X_t(\om),\Delta Y_t(\om))1_{\{\Delta Z_t(\om)\neq0\}}-\wh U\p{Z,\Gbb}_t(\om)
\\&=U(\om,t,\Delta X_t(\om),\Delta Y_t(\om))1_{\{\Delta Z_t(\om)\neq0\}}+\wh W\p{X,\Gbb}_t(\om)\wh V\p{Y,\Gbb}_t(\om)
\\&=\wt W\p{X,\Gbb}_t(\om)\wt V\p{Y,\Gbb}_t(\om),\quad \textnormal{for every $t\in\Rbb_+$,\ a.s.\ }
\end{split}
\end{equation}
where, in the last identity, we used \eqref{eq:exp.for.prod}. We therefore have
\[
\Ebb\left[\sum_{s\geq0}\big(\wt U\p{Z,\Gbb}_s\big)\p2\right]=\Ebb\left[\sum_{s\geq0}\big(\wt W\p{X,\Gbb}_s\wt V\p{Y,\Gbb}_s\big)\p2\right]=\Ebb\big[[S,S]_\infty\big]<+\infty,
\]
where, for the last estimate, we recall that $S\in\Hscr\p2(\Gbb)$ (see \cite[Proposition I.4.50 c)]{JS00}). Therefore, the inclusion $U\in\Gscr\p2(\mu\p{Z},\Gbb)$ holds. Hence, we can introduce the purely-discontinuous square-integrable $\Gbb$-martingale $U\ast(\mu\p{Z}-\nu\p{Z})$ and we get
\[
\Delta U\ast(\mu\p{Z}-\nu\p{Z})=\wt U\p{Z,\Gbb}=\wt W\p{X,\Gbb}\wt V\p{Y,\Gbb}=\Delta S,
\] 
up to an evanescent set. Hence, the purely discontinuous martingales $U\ast(\mu\p{Z}-\nu\p{Z})$ and $S=[M,N]$ have the same jumps, up to an evanescent set. By \cite[Corollary I.4.19]{JS00}, we conclude that $S$ and $U\ast(\mu\p{Z}-\nu\p{Z})$ are indistinguishable. Summarizing, we have shown that $U\in\Gscr\p2(\mu\p{Z},\Gbb)$ and
\begin{equation}\label{eq:rep.sq.br.rep}
[M,N]=U\ast(\mu\p{Z}-\nu\p{Z}).
\end{equation}
The proof of Step 1 is complete.
\paragraph*{Step 2: Representation of $N_-W\ast(\mu\p X-\nu\p X)$ and of $M_-V\ast(\mu\p Y-\nu\p Y)$.}
For notational convenience, we denote in this part $g_1(x,y):=1_{\{x\neq0\}}$ and $g_2(x,y):=1_{\{y\neq0\}}$, $(x,y)\in\Rbb\p d\times\Rbb\p\ell$. We are only going to show how to represent $N_-W\ast(\mu\p X-\nu\p X)$, the result for $M_-V\ast(\mu\p Y-\nu\p Y)$ being completely analogous. We observe that, by Lemma \ref{lem:What}, we obtain the identity  $W\ast(\mu\p X-\nu\p X)=Wg_1\ast(\mu\p{Z}-\nu\p{Z})$. By Proposition \ref{prop:G.char}, Lemma \ref{lem:What}, $N_-$ being predictable, we have, because of Proposition \ref{prop:gilocG} (ii), (iii) and the independence,
\[
\begin{split}
\Ebb\bigg[\sum_{s\geq0}\big(\wt{N_{-}Wg_1}_s\p{Z,\Gbb}\big)\p2\bigg]&=\Ebb\bigg[\sum_{s\geq0}\big(N_{s-}\wt{Wg_1}_s\p{Z,\Gbb}\big)\p2\bigg]
\\&=\Ebb\bigg[\sum_{s\geq0}\big(N_{s-}\wt{W}_s\p{X,\Gbb}\big)\p2\bigg]
\\&\leq\Ebb\bigg[\sup_{t\geq0}(N\p2_t)\sum_{s\geq0}\big(\wt{W}_s\p{X,\Fbb}\big)\p2\bigg]
\\&\leq\|N\|_{\Hscr\p2(\Gbb)}\p2\|W\|_{\Gscr\p2(\mu\p X,\Gbb)}\p2<+\infty,
\end{split}
\]
 showing the inclusion $N_-Wg_1\in\Gscr\p2(\mu\p{Z},\Gbb)$. \cite[Proposition II.1.30b)]{JS00} and Lemma \ref{lem:What} (i) furthermore yield
\begin{equation}\label{eq:rep.first.st.int}
\begin{split}
N_-W\ast(\mu\p X-\nu\p X)&=N_-\cdot(W\ast(\mu\p X-\nu\p X))\\&=N_-\cdot(Wg_1\ast(\mu\p{Z}-\nu\p{Z}))\\&=N_-Wg_1\ast(\mu\p{Z}-\nu\p{Z}).
\end{split}
\end{equation}
Analogously, we get $M_-Vg_2\in\Gscr\p2(\mu\p{Z},\Gbb)$ and
\begin{equation}\label{eq:rep.sec.st.int}
M_-V\ast(\mu\p Y-\nu\p Y)=M_-Vg_2\ast(\mu\p{Z}-\nu\p{Z}).
\end{equation}
The proof of Step 2 is complete.
\paragraph*{Step 3: Representation of the continuous part.} We stress that for this step we need the properties of the stochastic integral of multidimensional predictable processes with respect to a \emph{multidimensional continuous local martingale}. For this topic we refer to \cite[III.\S4a]{JS00} and \cite[Section 4.2 \S a\ and \S b]{J79}.

We consider the $\Rbb\p{d}\times\Rbb\p{\ell}$-valued continuous $\Gbb$-local martingale $Z\p c=(X\p c, Y\p c)\p{tr}$ and introduce the $\Gbb$-predictable $\Rbb\p{d}\times\Rbb\p{\ell}$-valued process $H=(N_-K,M_-J)\p{tr}$. We notice that the identities $(Z\p i)\p c=(X\p i)\p c$, $H\p i=N_-K\p i$ for $i=1,\ldots,d$ and $(Z\p i)\p c=(Y\p i)\p c$, $H\p i=N_-K\p i$ for $i=d+1,\ldots,d+\ell$ holds. Let us define $C:=A+B$, where $A:=\sum_{i=1}\p d\aPP{(X\p i)\p c}{(X\p i)\p c}$ and $B:=\sum_{i=1}\p \ell\aPP{(Y\p i)\p c}{(Y\p i)\p c}$. So, $A$ and $B$ are absolutely continuous with respect to $C$. Additionally, because of Lemma \ref{lem:prop} (ii) and the uniqueness of the point brackets, $\aPP{(X\p i)\p c}{(X\p i)\p c}$ does not change in $\Gbb$. The same holds for $\aPP{(Y\p i)\p c}{(Y\p i)\p c}$. Therefore, $A$ is $\Fbb$-predictable and $B$ is $\Hbb$-predictable. According to \cite[Section 4.2 \S a\ and \S b]{J79}, there exists a $\Gbb$-predictable process $c$ taking values in the set of nonnegative symmetric  ${(d+\ell)\times(d+\ell)}$-matrices such that $\aPP{(Z\p i)\p c}{(Z\p j)\p c}=c\p{i,j}\cdot C$. Analogously, we find an $\Fbb$-predictable processes $a$ taking values in the set of nonnegative symmetric ${d\times d}$-matrices such that $\aPP{(X\p i)\p c}{(X\p j)\p c}=a\p{i,j}\cdot A$, $i,j=1,\ldots, d$ and an $\Hbb$-predictable processes $b$ taking values in the set of nonnegative symmetric ${\ell\times \ell}$-matrices such that $\aPP{(Y\p i)\p c}{(Y\p j)\p c}=b\p{i,j}\cdot B$, $i,j=1,\ldots, \ell$. Furthermore, since $\aPP{(X\p i)\p c}{(Y\p j)\p c}=0$, $i=1,\ldots,d$, $j=1,\ldots,\ell$, we also have $c\p{i,d+j}=0$, $i=1,\ldots,d$, $j=1,\ldots,\ell$.  
We are going to show that the $\Gbb$-predictable process $H$ introduced above belongs to $\Lrm\p2(Z\p c,\Gbb)$. For this, according to \cite[III.4.3]{JS00}, we have to verify that the increasing process $H\p\top c H\cdot C$ is integrable. Because of the structure of $c$, we see that the identity
\[
\begin{split}
H\p{tr} c H&=\sum_{i,j=1}\p{d+\ell}H\p ic\p{i,j}H\p j=\sum_{i,j=1}\p{d}H\p ic\p{i,j}H\p j+\sum_{i,j=1}\p{\ell}H\p {d+i}c\p{d+i,d+j}H\p{d+j}\\&=\sum_{i,j=1}\p{d}N_-\p2K\p iK\p jc\p{i,j}+\sum_{i,j=1}\p{\ell}M_-\p2J\p {i}J\p jc\p{d+i,d+j}
\end{split}
\]
holds. By linearity of the integral with respect to $C$, we now get
\[
\begin{split}
(H\p{tr} c H)\cdot C&=\sum_{i,j=1}\p{d}\{N_-\p2K\p iK\p jc\p{i,j}\cdot C\}+\sum_{i,j=1}\p{\ell}\{M_-\p2J\p {i}J\p jc\p{d+i,d+j}\cdot C\}
\\&=\sum_{i,j=1}\p{d}\{N_-\p2K\p iK\p j\cdot\aPP{(X\p i)\p c}{(X\p j)\p c}\}+\sum_{i,j=1}\p{\ell}\{M_-\p2J\p {i}J\p j\cdot\aPP{(Y\p i)\p c}{(Y\p j)\p c}\}
\\&=\sum_{i,j=1}\p{d}\{N_-\p2K\p iK\p ja\p{i,j}\cdot A\}+\sum_{i,j=1}\p{\ell}\{M_-\p2J\p {i}J\p jb\p{i,j}\cdot B\}
\\&=(N_-\p2K\p{tr} aK)\cdot A+(M_-\p2J\p{tr} bJ)\cdot B.
\end{split}
\]
Hence, the independence of $\Fbb$ and $\Hbb$ yields
\[\begin{split}\Ebb\big[(H\p{tr} c H)\cdot C_\infty\big]&=
\Ebb\big[(N_-\p2K\p{tr} aK)\cdot A_\infty+(M_-\p2J\p{tr} bJ)\cdot B_\infty\big]\\&\leq\Ebb\Big[\sup_{t\geq0}N\p2_t\Big]\|K\|_{\Lrm\p2(X\p c,\Fbb)}\p2+\Ebb\Big[\sup_{t\geq0}M\p2_t\Big]\|J\|_{\Lrm\p2(Y\p c,\Hbb)}\p2<+\infty,
\end{split}\]
where, in the last estimate, we used that $M$ and $N$ are square integrable martingales, that $K\in \Lrm\p2(X\p c,\Fbb)$ and that $J\in \Lrm\p2(Y\p c,\Hbb)$. This shows the inclusion $H\in \Lrm\p2(Z\p c,\Gbb)$. We are now going to verify that the identity $H\cdot Z\p c=N_-K\cdot X\p c+M_-J\cdot Y\p c$ holds. Let $R\in\Hloc\p2(\Gbb)$ be continuous. Then, there exist two $\Gbb$-predictable process $a\p{Ri}$ and $b\p{R\p j}$ such that $\aPP{R}{(X\p i)\p c}=a\p{Ri}\cdot A$ and $\aPP{R}{(Y\p j)\p c}=b\p{Rj}\cdot B$, $i=1,\ldots,d$, $j=1,\ldots,\ell$ (see \cite[Eq.\ (III.4.4) and the explanation before]{JS00}). On the other side, since $A$ and $B$ are absolutely continuous with respect to $C$, we find two $\Gbb$-predictable processes $\al$ and $\bt$ such that $A=\al\cdot C$ and $B=\bt\cdot C$. In conclusion, we get $\aPP{R}{(X\p i)\p c}=a\p{Ri}\al\cdot C$ and $\aPP{R}{(Y\p j)\p c}=b\p{Rj}\bt\cdot C$, $i=1,\ldots,d$, $j=1,\ldots,\ell$. So, if we define the $\Gbb$-predictable process $c\p{R k}$, $i=1,\ldots,d+\ell$, by
\[
c\p{Ri}:=\begin{cases}a\p{Ri}\al,\quad i=1,\ldots,d\\\\b\p{Ri-d}\bt, \quad i=d+1,\ldots,\ell,\end{cases}
\]
we get $\aPP{R}{(Z\p i)\p c}=c\p{Ri}\cdot C$, $i=1,\ldots,d+l$. By the linearity of the predictable quadratic covariation and \cite[Theorem III.4.5b)]{JS00} applied to the two stochastic integrals $N_-K\cdot X\p c$ and $M_-J\cdot Y\p c$, we compute
\[
\begin{split}
\aPP{R}{N_-K\cdot X\p c+M_-J\cdot Y\p c}&=\aPP{R}{N_-K\cdot X\p c}+\aPP{R}{M_-J\cdot Y\p c}\\&=\sum_{i=1}\p dN_-K\p ia\p{Ri}\cdot A+\sum_{i=1}\p \ell M_-J\p ib\p{Ri}\cdot B
\\&=\bigg(\sum_{i=1}\p dN_-K\p ia\p{Ri}\al+\sum_{i=1}\p \ell M_-J\p ib\p{Ri}\bt\bigg)\cdot C
\\&=\bigg(\sum_{i=1}\p dN_-K\p ic\p{Ri}+\sum_{i=1}\p \ell M_-J\p ic\p{Rd+i}\bigg)\cdot C
\\&=\bigg(\sum_{i=1}\p{d+\ell}H\p ic\p{Ri}\bigg)\cdot C=\aPP{R}{H\cdot Z\p c}
\end{split}
\]
where in the last identity we again applied \cite[Theorem III.4.5b)]{JS00}. This latter computation together with \cite[Theorem III.4.5b)]{JS00} yields that $H\cdot Z\p c$ and $N_-K\cdot X\p c+M_-J\cdot Y\p c$ are indistinguishable, because $R$ was chosen arbitrarily. The proof of Step 3 is complete.
\paragraph*{Step 4: Representation of $\xi\et$.}
We now introduce the $\Pscr(\Gbb)\otimes\Bscr(\Rbb\p d\times\Rbb\p\ell)$-measurable function $(\om,t,x,y)\mapsto G(\om,t,x,y)$ by setting, for $(\om,t,x,y)\in\Om\times\Rbb_+\times\Rbb\p d\times\Rbb\p\ell$,
\[
G(\om,t,x,y):=N_{t-}(\om)W(\om,t,x)g_1(x,y)+M_{t-}(\om)V(\om,t,y)g_2(x,y)+U(\om,t,x,y), 
\]
where the $\Gbb$-predictable function $U$ has been defined in \eqref{eq:def.int.U} and the functions $g_1$ and $g_2$ have been defined at the beginning of Step 2. Step 1 and Step 2 yield the inclusion $G\in\Gscr\p2(\mu\p{Z},\Gbb)$ and, by the linearity of the stochastic integral with respect to $\mu\p Z-\nu\p Z$, we get the identity
\begin{equation}\label{eq:rep.jumps}
N_-W\ast(\mu\p X-\nu\p X)+M_-V\ast(\mu\p Y-\nu\p Y)+[M,N]=G\ast(\mu\p{Z}-\nu\p{Z}).
\end{equation}
Let us now consider the $\Rbb\p{d}\times\Rbb\p\ell$-valued continuous local martingale $Z\p c=(X\p c,Y\p c)\p{tr}$. By Step 3 we have the identity $H\cdot Z\p c=N_-K\cdot X\p c+M_-J\cdot Y\p c$, where $H\in \Lrm\p2(Z\p c,\Gbb)$  has been defined in Step 3. Therefore, form \eqref{eq:rep.jumps} and \eqref{eq:par.int}, we get $M_tN_t=M_0N_0+H\cdot Z\p c_t+G\ast(\mu\p{Z}-\nu\p{Z})_t$, $t\geq0$. Now, using that each term on the right- and on the left-hand side in this latter expression belongs to $\Hscr\p2(\Gbb)$, taking the limit $t\rightarrow+\infty$, by the martingale convergence theorem (see \cite[Theorem I.4.42a)]{JS00}), we get
\[
\xi\et=\Ebb[\xi\et|\Gscr_0]+H\cdot Z\p c_\infty+G\ast(\mu\p Z-\nu\p Z)_\infty,\quad H\in \Lrm\p2(Z\p c,\Gbb),\quad G\in\Gscr\p2(\mu\p{Z},\Gbb),
\]
 where we used the independence to write $M_0N_0=\Ebb[\xi\et|\Gscr_0]$. The proof of Step 4 is complete.
\paragraph*{Step 5: Representation of bounded $\Gscr_\infty$-measurable random variables.} As an application of the monotone class theorem, we now show that every $\xi\in\Bb(\Gscr_\infty)$ can be represented as
\begin{equation}\label{eq:rep.bou.rv}
\xi=\Ebb[\xi|\Gscr_0]+K\cdot Z\p c_\infty+W\ast(\mu\p{Z}-\nu\p{Z})_\infty,
\end{equation}
where $K\in\Lrm\p2(Z\p c,\Gbb)$ and $W\in\Gscr\p2(\mu\p{Z},\Gbb)$. To this aim, we denote by $\Kscr$ the linear space of random variables $\xi\in\Bb(\Gscr_\infty)$ that can be represented as in \eqref{eq:rep.bou.rv}. We denote  by $\Cscr\subseteq\Bb(\Gscr_\infty)$  the family  $\Cscr:=\{\xi\et,\ \xi\in\Bb(\Fscr_\infty),\ \et\in\Bb(\Hscr_\infty)\}$. Then, $\Cscr$ is clearly stable under multiplication and $\sig(\Cscr)=\Gscr_\infty$. Furthermore, by Step 4, we have $\Cscr\subseteq\Kscr$. The linear space $\Kscr$ is a monotone class of $\Bb(\Gscr_\infty)$. Indeed, let $(\xi_n)_n\subseteq\Kscr$ be a uniformly bounded sequence such that $\xi_n\geq0$ and $\xi_n\uparrow\xi$ pointwise. Then, $\xi$ is bounded and, by dominated convergence, we get $\xi_n\longrightarrow\xi$ in $L\p2(\Om,\Gscr_\infty,\Pbb)$ as $n\rightarrow+\infty$. From Lemma \ref{lem:WRP.lim}, we immediately get that $\xi\in\Kscr$. Since the inclusion $1\in\Kscr$ obviously holds, we see that $\Kscr$ is a monotone class of $\Bb(\Gscr_\infty)$. The monotone class theorem for functions (see \cite[Theorem 1.4]{HWY92}), now yields the inclusion $\Bb(\Gscr_\infty)\subseteq\Kscr$. The proof of Step 5 is complete.
\paragraph*{Step 6: Approximation and conclusion.} Let now $\xi\in L\p2(\Om,\Gscr_\infty,\Pbb)$ be a nonnegative random variable. Then, the random variable $\xi_n:=\xi\wedge n$, $n\geq1$, is bounded and furthermore $\xi_n\longrightarrow\xi$ in $L\p2(\Om,\Gscr_\infty,\Pbb)$ as $n\rightarrow+\infty$. By Step 5, $\xi_n$ can be represented as in \eqref{eq:rep.bou.rv}, for every $n\geq1$. By Lemma \ref{lem:WRP.lim}, we get that the same holds for $\xi$. If now $\xi\in L\p2(\Om,\Gscr_\infty,\Pbb)$ is an arbitrary 
random variable, we write $\xi$ as the difference of the positive and negative part: $\xi=\xi\p+-\xi\p-$. Clearly, we have that $\xi\p\pm\in L\p2(\Om,\Gscr_\infty,\Pbb)$ and $\xi\p\pm\ge0$ can be represented as in \eqref{eq:rep.bou.rv}. Using now the linearity of the stochastic integrals, we obtain that $\xi$ can be represented as in \eqref{eq:rep.bou.rv} as well. To conclude the proof, we now use that the Hilbert spaces $(\Hscr\p2(\Gbb),\|\cdot\|_2)$ and $(L\p2(\Om,\Gscr_\infty,\Pbb),\|\cdot\|_2)$ are isomorphic. Therefore, we have that every $S\in\Hscr\p2(\Gbb)$ can be represented as
\[
S_t=\Ebb[S_\infty|\Gscr_t]=Y_0+K\cdot Z\p c_t+W\ast(\mu\p{Z}-\nu\p{Z})_t,\quad t\geq0,
\]
where $K\in\Lrm\p2(Z\p c,\Gbb)$ and $W\in\Gscr\p2(\mu\p{Z},\Gbb)$. Because of Proposition \ref{prop:eq.wprp} , this means that the $\Rbb\p{d}\times\Rbb\p{\ell}$-valued semimartingale $Z=(X,Y)\p{tr}$ has the WRP with respect to $\Gbb$. The proof Step 6 is complete. The proof of the theorem is complete. 
\end{proof}
\begin{examples}\label{ex:exam}
We now discuss some cases in which the propagation of the WRP to the independently enlarged filtration $\Gbb$ immediately follows from Theorem \ref{thm:mar.rep.G}. Notice that in all the following examples we do not exclude that the semimartingales $X$ and $Y$ may have common jumps. Furthermore, $\Delta X$ and $\Delta Y$ may charge predictable times.
\\[.5cm]
\indent \textbf{(1)} Assume that the semimartingales $(X,\Fbb\p X)$ and $(Y,\Fbb\p Y)$ are independent \emph{step} processes (see \cite[Definition 11.55]{HWY92}) and $\Fbb:=\Fbb\p X\vee\Rscr\p X$, $\Hbb:=\Fbb\p Y\vee\Rscr\p Y$, where $\Rscr\p X$ and $\Rscr\p Y$ are some initial $\sig$-fields such that $\Fbb$ and $\Hbb$ are independent (i.e., such that $\{\Fscr_\infty\p X,\Rscr\p X\}$ and $\{\Fscr_\infty\p Y,\Rscr\p Y\}$ are independent). Then, \cite[Theorem 13.19]{HWY92} yields that $X$ has the WRP with respect to $\Fbb$ and $Y$ has the WRP with respect to $\Hbb$. Because of Theorem \ref{thm:mar.rep.G}, we deduce that the $\Rbb\p 2$-valued $\Gbb$-semimartingale $Z=(X,Y)\p{tr}$ has the WRP with respect to $\Gbb=\Fbb\vee\Hbb$.
\\[.5cm]
\indent \textbf{(2)}  We can generalize the case in {(1)} as follows: Let $X$ and $Y$ be as in {(1)}. Let $B$ and $W$ be two independent Brownian motions such that $B$ is independent of $X$ and $W$ is independent of $Y$. We define $R:=B+X$ and $S:=W+Y$. Assume furthermore for simplicity that $\Rscr\p X$ and $\Rscr\p Y$ in {(1)} are both trivial. By \cite[Corollary 2]{WG82}, $R$ has the WRP with respect to $\Fbb\p R$ and $S$ has the WRP with respect to $\Fbb\p S$. Assuming that $\Fbb\p R$ and $\Fbb\p S$ are independent, Theorem \ref{thm:mar.rep.G} implies that the $\Rbb\p 2$-valued $\Gbb$-semimartingale $Z=(R,S)\p{tr}$ has the WRP with respect to $\Gbb=\Fbb\p R\vee\Fbb\p S$.
\\[.5cm]
\indent \textbf{(3)} 
We take $Y$ and $\Hbb$ as in {(1)} but $(X,\Fbb\p X)$ is assumed to be an $\Rbb\p d$-valued semimartingale with conditionally independent increments with respect to $\Fbb=\Fbb\p X\vee\Rscr\p X$. Again we assume that $\Fbb$ and $\Hbb$ are independent. From \cite[Theorem III.4.34]{JS00} (i), $X$ has the WRP with respect to $\Fbb$. Hence, Theorem \ref{thm:mar.rep.G} yields that the $\Rbb\p d\times\Rbb$-valued $\Gbb$-semimartingale $Z=(X,Y)\p{tr}$ has the WRP with respect to $\Gbb=\Fbb\vee\Hbb$.
\\[.5cm]
\indent \textbf{(4)} Let $(X,\Fbb\p X)$ be an $\Rbb\p d$-valued semimartingale and let $(Y,\Fbb\p Y)$ be an $\Rbb\p\ell$-valued semimartingale. Let $\Rscr\p X$ and $\Rscr\p Y$ denote two initial $\sig$-fields. We  assume that $X$ has conditionally independent increments with respect to $\Fbb=\Fbb\p X\vee\Rscr\p X$ and $Y$ has conditionally independent increments with respect to $\Hbb:=\Fbb\p Y\vee\Rscr\p Y$.  As an immediate consequence of \cite[Theorem III.4.34]{JS00} and of Theorem \ref{thm:mar.rep.G}, if $\Fbb$ and $\Hbb$ are independent, we get  that $Z=(X,Y)\p{tr}$ possesses the WRP with respect to $\Gbb=\Fbb\p X\vee\Fbb\p Y$.  
\\[.5cm]
\indent \textbf{(5)} Combining Theorem \ref{thm:mar.rep.G} and \cite[Theorem 2]{WG82}, we can construct several new semimartingales possessing the WRP. Ideed, let $X\p{(1)}$ and $X\p{(2)}$ be real-valued semimartingales possessing the WRP with respect to $\Fbb\p 1$ and $\Fbb\p 2$, respectively. Assume that $\Fbb\p1$ and $\Fbb\p 2$ are independent and set $\Fbb=\Fbb\p1\vee\Fbb\p 2$. Assume furthermore that the set of the $\Fbb$-predictable jump-times of $X\p{(1)}$ and $X\p{(2)}$ are disjoint and that the second and the third $\Fbb$-semimartingale characteristics of $X\p{(1)}$ and $X\p{(2)}$ are \emph{mutually singular} on $\Pscr(\Fbb)\otimes\Bscr(\Rbb)$ (see \cite[Theorem 2]{WG82}). Then, by \cite[Theorem 2]{WG82}, the semimartingale $X=X\p{(1)}+X\p{(2)}$ possess the WRP with respect to $\Fbb$ (see also \cite[Corollary 1 and Corollary 2]{WG82}). We consider the semimartingales $Y\p{(1)}$ and $Y\p{(2)}$ with respect to $\Hbb\p 1$ and $\Hbb\p 2$. We set $\Hbb:=\Hbb\p1\vee\Hbb\p2$ and make on $Y\p{(1)}$ and $Y\p{(2)}$ with respect to $\Hbb\p 1$ and $\Hbb\p 2$, respectively, similar assumptions as for $X\p{(1)}$ and $X\p{(2)}$. Then, the semimartingale $Y=Y\p{(1)}+Y\p{(2)}$ has the WRP with respect to $\Hbb$. If we now assume that $\Fbb$ and $\Hbb$ are independent, we get by Theorem \ref{thm:mar.rep.G} that the $\Rbb\p2$-valued semimartingale $Z=(X,Y)\p{tr}$ has WRP with respect to $\Gbb:=\Fbb\vee\Hbb$.
\\[.5cm]
\indent \textbf{(6)} The counterexample constructed in \cite{WG82} after Corollary 2 therein can be also handled with the help of Theorem \ref{thm:mar.rep.G}. We now use the notation of \cite{WG82}: Let $X\p{(1)}$ and $X\p{(2)}$ denote the processes introduced in  \cite{WG82} after Corollary 2. Then, Wang and Gang showed that the semimartingale $X\p{(1)}+X\p{(2)}$ does not possess the WRP with respect to the filtration $\Fbb:=\Fbb\p1\vee\Fbb\p2$. However, by  Theorem \ref{thm:mar.rep.G}, we see that the $\Rbb\p2$-valued semimartingale $Z=(X\p{(1)},X\p{(2)})\p{tr}$ possesses the WRP with respect to $\Fbb$.
\end{examples}
\subsection{Propagation of the Predictable Representation Property}\label{subsec:rep.prp}
In this subsection we investigate the propagation of the \emph{predictable representation property} to the independently enlarged filtration.
To begin with, we state the following definition of the predictable representation property.
\begin{definition}\label{def:PRP}
Let $X=(X\p1,\ldots,X\p d)\p{tr}$ be such that $X\p i\in\Hloc\p 1(\Fbb)$, $i=1,\ldots,d$. We say that the multidimensional local martingale $X$ has the \emph{predictable representation property} (from now on PRP) with respect to $\Fbb$ if, for every $Y\in\Mloc(\Fbb)$, there exists $K\in\Lrm\p1_\mathrm{loc}(X,\Fbb)$, such that $Y=Y_0+K\cdot X$ holds.
\end{definition}

In the next proposition, we state the relation between the PRP and the WRP for \emph{multidimensional} local martingales.  We stress that at this point we cannot directly use \cite[Theorem 13.14]{HWY92} because that result is only formulated for $\Rbb$-valued (and not for multidimensional) local martingales. This is a deep difference (although the formulation can be given in a notationally similar way) because, in the proof of Proposition \ref{prop:WRP.PRP} below, the stochastic integral for multidimensional local martingales is needed instead of the usual stochastic integral.  
\begin{proposition}\label{prop:WRP.PRP}
Let $X=(X\p1,\ldots,X\p d)\p{tr}$ be such that $X\p i\in\Hscr\p 1_\mathrm{loc}(\Fbb)$, $i=1,\ldots,d$. Assume that $X$ possess the \emph{PRP} with respect to $\Fbb$. Then, $X$ possesses the \emph{WRP} with respect to $\Fbb$.
\end{proposition}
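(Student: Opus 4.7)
The plan is to take an arbitrary $N\in\Hloc\p1(\Fbb)$, apply the PRP to write $N=N_0+K\cdot X$ with $K\in\Lrm\p1_\mathrm{loc}(X,\Fbb)$, decompose $X$ componentwise into continuous and purely discontinuous local martingale parts, and reinterpret the jump contribution as a stochastic integral against $(\mu\p X-\nu\p X)$.

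First I would set $X\p c:=((X\p1)\p c,\ldots,(X\p d)\p c)\p{tr}$ and $X\p d:=X-X\p c$. The inclusion $K\in\Lrm\p1_\mathrm{loc}(X\p c,\Fbb)$ follows from the fact that the symmetric nonnegative matrix process controlling $[X\p c,X\p c]$ is dominated in the positive-semidefinite order by that controlling $[X,X]$, so the integrability condition $((K\p{tr}aK)\cdot A)\p{1/2}\in\Ascr\p+_\mathrm{loc}$ transfers. By bilinearity of the multidimensional stochastic integral, $K\cdot X=K\cdot X\p c+K\cdot X\p d$, where $K\cdot X\p c$ is already of the required first form and $K\cdot X\p d\in\Hloc\p1_0(\Fbb)$ is purely discontinuous with jumps $K\p{tr}\Delta X$ on $\{\Delta X\neq0\}$.

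The core step is to realize $K\cdot X\p d$ as $W\ast(\mu\p X-\nu\p X)$ for the $\Pscr(\Fbb)\otimes\Bscr(\Rbb\p d)$-measurable function $W(\om,t,x):=K_t(\om)\p{tr}x$. Because $X$ is an $\Fbb$-local martingale, \cite[Corollary I.2.31]{JS00} yields ${}\p{p,\Fbb}(\Delta X\,1_{\{\Delta X\neq0\}})=0$; since $K$ is $\Fbb$-predictable it passes through the extended predictable projection, so $\wh W\p X={}\p{p,\Fbb}(K\p{tr}\Delta X\,1_{\{\Delta X\neq0\}})=0$ and hence $\wt W\p X=K\p{tr}\Delta X\,1_{\{\Delta X\neq0\}}=\Delta(K\cdot X\p d)$. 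From $\sum_{s\leq\cdot}(\wt W\p X_s)\p2=[K\cdot X\p d,K\cdot X\p d]$ and the fact that $K\cdot X\p d$ is a local martingale (so its bracket has locally integrable square root), one gets $W\in\Gloc\p1(\mu\p X,\Fbb)$. Then $W\ast(\mu\p X-\nu\p X)$ is a well-defined purely discontinuous element of $\Hloc\p1_0(\Fbb)$ with the same jumps as the purely discontinuous local martingale $K\cdot X\p d$, so by \cite[Corollary I.4.19]{JS00} they are indistinguishable.

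Combining the two pieces gives $N=N_0+K\cdot X\p c+W\ast(\mu\p X-\nu\p X)$ with $K\in\Lrm\p1_\mathrm{loc}(X\p c,\Fbb)$ and $W\in\Gloc\p1(\mu\p X,\Fbb)$, which is exactly the WRP in the sense of Definition \ref{def:w.prp}. The key conceptual point is that the predictability of $K$, combined with the identity ${}\p{p,\Fbb}(\Delta X\,1_{\{\Delta X\neq0\}})=0$ for local martingales, forces the compensator $\wh W\p X$ to vanish, so no correction term survives when translating the $\Lrm\p1_\mathrm{loc}(X)$-integral into the random-measure form; the rest is bookkeeping to verify that $W$ belongs to the correct $\Gloc\p1$-class.
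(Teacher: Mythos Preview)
Your argument is correct and in fact takes a more explicit route than the paper's own proof. The paper does not construct $W$ by hand: after decomposing $K\cdot X=K\cdot X\p c+K\cdot X\p d$ via \cite[Theorem 4.68]{J79}, it first shows the inclusion $\Hscr\p{1,d}(\Fbb)\subseteq\{H\cdot X\p d:\ H\in\Lrm\p1(X\p d,\Fbb)\}$ (using the PRP and passing to purely discontinuous parts), and then invokes \cite[Theorem 4.80]{J79} as a black box to conclude that $\Hscr\p{1,d}(\Fbb)=\{W\ast(\mu\p X-\nu\p X):\ W\in\Gscr\p1(\mu\p X,\Fbb)\}$. In other words, the paper establishes the random-measure representation abstractly, without ever exhibiting $W$ in terms of $K$.

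Your approach bypasses \cite[Theorem 4.80]{J79} entirely by writing down the natural candidate $W(\om,t,x)=K_t(\om)\p{tr}x$ and checking membership in $\Gloc\p1(\mu\p X,\Fbb)$ directly. The key identity $\wh W\p X=0$ is perhaps cleanest if you observe $W(\cdot,\cdot,\Delta X)1_{\{\Delta X\neq0\}}=K\p{tr}\Delta X=\Delta(K\cdot X)$ first and then apply \cite[Corollary I.2.31]{JS00} to the local martingale $K\cdot X$ itself, rather than pulling $K$ through ${}\p{p,\Fbb}(\Delta X)$ componentwise; this sidesteps any worry about whether the extended predictable projection commutes with multiplication by an unbounded predictable $K$. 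Either way the conclusion is the same. What you gain is an explicit formula for the WRP integrand and independence from the structure theorem \cite[Theorem 4.80]{J79}; what the paper's route gains is a slightly stronger intermediate statement (the identification of the whole space $\Hscr\p{1,d}(\Fbb)$), at the cost of citing a heavier result.
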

We postpone the proof of Proposition \ref{prop:WRP.PRP} to the appendix. We stress that its converse is, in general, not true: If for example $X$ is an $\Rbb$-valued homogeneous L\'evy process and a martingale, then $X$ possesses the WRP with respect to $\Fbb\p X$ but it possesses the PRP with respect to $\Fbb\p X$ if and only if it is a Brownian motion or a compensated Poisson process (see, e.g., \cite[Corollary 13.54]{HWY92}).

\begin{corollary}[to Theorem \ref{thm:mar.rep.G}: Propagation of the PRP]\label{cor:srstr.rep}
Let $\Fbb$ and $\Hbb$ satisfy Assumption \ref{ass:indep}. Let $(X,\Fbb)$ be an $\Rbb\p d$-valued local martingale possessing the \emph{PRP} with respect to $\Fbb$ and let $(Y,\Hbb)$ be a semimartingale. Then, the $\Rbb\p{d}\times\Rbb\p{\ell}$-valued $\Gbb$-semimartingale $Z=(X,Y)\p{tr}$ possesses the \emph{WRP} with respect to $\Gbb$ if one of the following two conditions is satisfied:

\textnormal{(i)} $Y$ has the \emph{WRP} with respect to $\Hbb$.

\textnormal{(ii)} $Y$ is a $\Hbb$-local martingale possessing the \emph{PRP} with respect to $\Hbb$.
\end{corollary}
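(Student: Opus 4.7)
The plan is to reduce both cases to a direct application of Theorem \ref{thm:mar.rep.G}, using Proposition \ref{prop:WRP.PRP} as the bridge between the PRP and WRP hypotheses. The corollary is essentially a bookkeeping consequence of these two results, so no new technical machinery is required.

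For case (i), I would argue as follows. By hypothesis, $X$ is an $\Rbb^d$-valued $\Fbb$-local martingale possessing the PRP with respect to $\Fbb$. Applying Proposition \ref{prop:WRP.PRP} to $X$, we obtain that $X$ also possesses the WRP with respect to $\Fbb$. Since a local martingale is in particular a semimartingale, $(X,\Fbb)$ is an $\Rbb^d$-valued semimartingale with WRP with respect to $\Fbb$, and by hypothesis $(Y,\Hbb)$ is an $\Rbb^\ell$-valued semimartingale with WRP with respect to $\Hbb$. Since $\Fbb$ and $\Hbb$ satisfy Assumption \ref{ass:indep}, all hypotheses of Theorem \ref{thm:mar.rep.G} are in place, and we conclude that $Z=(X,Y)^{tr}$ has the WRP with respect to $\Gbb$.

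For case (ii), the argument reduces to case (i). Indeed, $Y$ is now an $\Hbb$-local martingale possessing the PRP with respect to $\Hbb$, so a further application of Proposition \ref{prop:WRP.PRP} (this time to $Y$) yields that $Y$ possesses the WRP with respect to $\Hbb$. We are then exactly in the setting of case (i), and the conclusion follows.

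Since the entire proof is a transcription of the two results just cited, I do not anticipate any real obstacle; the only point one has to check carefully is that Proposition \ref{prop:WRP.PRP} is stated for multidimensional local martingales (which indeed it is in the paper), so that it can be applied to the $\Rbb^d$-valued $X$ in case (i) and to the $\Rbb^\ell$-valued $Y$ in case (ii) without appealing to the scalar version \cite[Theorem 13.14]{HWY92}.
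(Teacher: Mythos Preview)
Your proposal is correct and matches the paper's proof exactly: the paper simply states that the corollary is an immediate consequence of Proposition~\ref{prop:WRP.PRP} and Theorem~\ref{thm:mar.rep.G}, which is precisely the reduction you carry out.
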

\begin{proof}
The statement is an immediate consequence of Proposition \ref{prop:WRP.PRP} and of Theorem \ref{thm:mar.rep.G}.
\end{proof}
\begin{remark}\label{rem:strong-strong}
We recall that, if  $X$ has the PRP with respect to $\Fbb=\Fbb\p X$, $Y$ has the PRP with respect to $\Hbb=\Fbb\p Y$ and if $\Fbb\p X$ and $\Fbb\p Y$ are independent, Calzolari and Torti showed in \cite{CT19} (under some additional conditions as, in particular, the triviality of $\Fscr\p X_0$ and $\Fscr\p Y_0$  and the locally-square integrability of $X$ and $Y$) that every $S\in\Hscr\p2(\Gbb)$ can also be represented as
\begin{equation}\label{eq:rep.st}
S=Y_0+K\cdot X+H\cdot Y+V\cdot[X,Y],
\end{equation}
for $K\in\Lrm\p2(X,\Gbb)$, $H\in\Lrm\p2(M,\Gbb)$, $V\in\Lrm\p2([X,M],\Gbb)$. Corollary \ref{cor:srstr.rep} (iii) shows that the representation in \eqref{eq:rep.st} can be formulated in terms of WRP with respect to $Z=(X,Y)\p{tr}$.
\end{remark}
\section{Applications}\label{sec:apl}
In this part we discuss two consequences of Theorem \ref{thm:mar.rep.G}. First, in Subsection \ref{subs:seq.en} we show the propagation of the WRP to an iteratively independent enlarged filtration. In Subsection \ref{subs:jeq}, we show the propagation of the WRP to the progressively enlargement by a random time $\tau$ satisfying Jacdod's equivalence hypothesis. 
\subsection{The Iterated Enlargement}\label{subs:seq.en} 
 
We recall that the filtrations $\Fbb\p 1,\ldots,\Fbb\p n$ are called \emph{jointly} independent, if $\{\Fscr\p 1_\infty,\ldots,\Fscr\p n_\infty\}$ is an independent family of $\sig$-algebras.
\begin{theorem}\label{thm:prop.seq.en}
Let $(X\p i,\Fbb\p i)$ be an $\Rbb\p{d_i}$-valued semimartingale, $d_i\in\Nbb$, and let $(B\p i,C\p i,\nu\p i)$ denote the $\Fbb\p i$-predictable characteristics of $X\p i$, $i=1,\ldots,n$. We denote $\Gbb:=\bigvee_{i=1}\p n\Fbb\p i$ and we assume that $\Fbb\p 1,\ldots,\Fbb\p n$ are jointly independent. We then have:

\textnormal{(i)} The filtration $\Gbb$ is right-continuous.

\textnormal{(ii)} $X\p i$ is a $\Gbb$-semimartingale and its $\Gbb$-predictable characteristics are $(B\p i,C\p i,\nu\p i)$, $i=1,\ldots,n$.

\textnormal{(iii)} If $X\p i$ possesses the \emph{WRP} with respect to $\Fbb\p i$, $i=1,\ldots,n$, then the $\Rbb\p{d_1}\times\cdots\times\Rbb\p{d_n}$-valued semimartingale $Z=(X\p 1,\ldots,X\p n)\p{tr}$ possesses the \emph{WRP} with respect to $\Gbb$.
\end{theorem}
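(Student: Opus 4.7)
My plan is to proceed by induction on $n$, with the two-fold case $n=2$ handled directly: part (i) is Lemma \ref{lem:prop}(i), part (ii) is Proposition \ref{prop:G.char}, and part (iii) is Theorem \ref{thm:mar.rep.G} itself. The strategy for the inductive step is to split off the last filtration $\Fbb\p n$ and apply the two-fold result to the pair consisting of the partial join and $\Fbb\p n$.

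For the inductive step, I would introduce
\[
\Gbb' := \bigvee_{i=1}\p{n-1}\Fbb\p i, \qquad Z' := (X\p 1, \ldots, X\p{n-1})\p{tr},
\]
and first verify that $\Gbb'$ and $\Fbb\p n$ satisfy Assumption \ref{ass:indep}. This follows from a standard $\pi$--$\lambda$ argument: the family of finite intersections $\{A_1 \cap \cdots \cap A_{n-1} : A_i \in \Fscr\p i_\infty\}$ is a $\pi$-system generating $\bigvee_{i=1}\p{n-1}\Fscr\p i_\infty$, and each such intersection is independent of $\Fscr\p n_\infty$ by the assumed joint independence, so the generated $\sigma$-algebra is independent of $\Fscr\p n_\infty$ as well. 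Moreover, the subfamily $\Fbb\p 1, \ldots, \Fbb\p{n-1}$ is itself jointly independent, so the inductive hypothesis applies and yields that $\Gbb'$ is right-continuous, each $X\p i$ with $i \le n-1$ is a $\Gbb'$-semimartingale with characteristics $(B\p i, C\p i, \nu\p i)$, and $Z'$ has the WRP with respect to $\Gbb'$.

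Applying the two-fold case to $(Z', \Gbb')$ and $(X\p n, \Fbb\p n)$ then closes the induction. Lemma \ref{lem:prop}(i) delivers right-continuity of $\Gbb = \Gbb' \vee \Fbb\p n$, giving (i); Proposition \ref{prop:G.char}, applied to $X\p i$ viewed as a $\Gbb'$-semimartingale for $i < n$ and as an $\Fbb\p n$-semimartingale for $i = n$, yields the unchanged $\Gbb$-characteristics, giving (ii); and Theorem \ref{thm:mar.rep.G} gives that $(Z', X\p n)\p{tr} = Z$ possesses the WRP with respect to $\Gbb$, giving (iii). I do not anticipate a genuine obstacle here: the heavy analytical work is already absorbed into Theorem \ref{thm:mar.rep.G}, and the only step specific to this corollary is the reduction from $n$-fold joint independence to the pairwise independence of $\Gbb'$ and $\Fbb\p n$ required by Assumption \ref{ass:indep}, which is routine measure theory.
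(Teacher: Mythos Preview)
Your proposal is correct and follows essentially the same inductive strategy as the paper: split off $\Fbb\p n$, apply the induction hypothesis to $\Gbb'=\bigvee_{i=1}\p{n-1}\Fbb\p i$, and then invoke the two-fold results (Lemma~\ref{lem:prop}(i), Proposition~\ref{prop:G.char}, Theorem~\ref{thm:mar.rep.G}) for the pair $(\Gbb',\Fbb\p n)$. Your explicit $\pi$--$\lambda$ justification that $\Gscr'_\infty$ and $\Fscr\p n_\infty$ are independent is in fact more detailed than the paper, which simply asserts the equivalent reformulation of joint independence.
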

\begin{proof}
We show the result by induction, as a direct consequence of Theorem \ref{thm:mar.rep.G}. To this aim, we observe that the joint independence of $\Fbb\p 1,\ldots,\Fbb\p n$ is equivalent to the joint independence of $\Fbb\p 1,\ldots,\Fbb\p{n-1}$ together with the independence of the family $\{\Fbb\p 1,\ldots,\Fbb\p{n-1}\}$ of $\Fbb\p n$. Now, we start with the inductive argumet. If $n=1$, there is nothing to show. We assume that (i), (ii) and (iii) hold for $n=m-1$. We are going to verify them for $n=m$. By the induction hypothesis and Lemma \ref{lem:What}, we immediately obtain (i). Analogously, from Proposition \ref{prop:G.char}, we deduce (ii). Let us now define $X:=(X\p 1,\ldots,X\p{m-1})\p{tr}$ and $Y=X\p m$. Since $\Gbb=\big(\bigvee_{i=1}\p{m-1}\Fbb\p i\big)\vee\Fbb\p m$, by the induction hypothesis and Theorem \ref{thm:mar.rep.G}, we obtain that $Z$ has the WRP with respect to $\Gbb$, which is (iii). The proof of the theorem is complete.
\end{proof}
\begin{corollary}
Let $(X\p i,\Fbb\p i)$ be an $\Rbb\p{d_i}$-valued local martingale (i.e., $X\p{ij}\in\Mloc(\Fbb\p i)$, $j=1,\ldots,d_i$, $i=1,\ldots,n$) such that $X\p i$ possesses the \emph{PRP} with respect to $\Fbb\p i$, $i=1,\ldots,n$. We denote $\Gbb:=\bigvee_{i=1}\p n\Fbb\p i$ and we assume that $\Fbb\p 1,\ldots,\Fbb\p n$ are jointly independent. Then, $(X\p i,\Gbb)$ are local martingales, $i=1,\ldots,n$, and the $\Rbb\p{d_1}\times\cdots\times\Rbb\p{d_n}$-valued $\Gbb$-local martingale $Z=(X\p 1,\ldots,X\p n)\p{tr}$ possesses the \emph{WRP} with respect to $\Gbb$.
\end{corollary}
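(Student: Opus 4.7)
The plan is to chain together Proposition \ref{prop:WRP.PRP} and Theorem \ref{thm:prop.seq.en}, with only one short verification needed in between. The corollary is really a packaging result: the substantive work has been done in these two results, and what remains is essentially bookkeeping.

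First I would use Proposition \ref{prop:WRP.PRP} to upgrade each PRP hypothesis to a WRP hypothesis. Since $X\p i$ is an $\Rbb\p{d_i}$-valued local martingale possessing the PRP with respect to $\Fbb\p i$, Proposition \ref{prop:WRP.PRP} yields that $X\p i$ possesses the WRP with respect to $\Fbb\p i$, for every $i = 1, \ldots, n$. Consequently, the hypotheses of Theorem \ref{thm:prop.seq.en} (iii) are met, and therefore the $\Rbb\p{d_1}\times\cdots\times\Rbb\p{d_n}$-valued semimartingale $Z = (X\p1, \ldots, X\p n)\p{tr}$ possesses the WRP with respect to $\Gbb := \bigvee_{i=1}\p n\Fbb\p i$. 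This is the main assertion of the corollary.

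It remains to justify that each $X\p i$ is indeed a $\Gbb$-local martingale. By joint independence, $\Fbb\p i$ is independent of $\Hbb\p i := \bigvee_{j\neq i}\Fbb\p j$, so the pair $(\Fbb\p i,\Hbb\p i)$ satisfies Assumption \ref{ass:indep} and $\Gbb = \Fbb\p i \vee \Hbb\p i$. The standard preservation of the martingale property under independent enlargement (the same Lemma \ref{lem:prop} (ii) invoked throughout the paper), applied componentwise and combined with a localization argument, then shows that each $X\p{ij} \in \Mloc(\Fbb\p i)$ belongs to $\Mloc(\Gbb)$. Alternatively, one may invoke Theorem \ref{thm:prop.seq.en} (ii), which states that the $\Gbb$-predictable characteristics of $X\p i$ coincide with its $\Fbb\p i$-characteristics; in particular the first characteristic of $X\p i$ still vanishes in $\Gbb$, so that $X\p i$ is a $\Gbb$-local martingale.

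I do not foresee any genuine obstacle: once Proposition \ref{prop:WRP.PRP} converts PRP to WRP and Theorem \ref{thm:prop.seq.en} handles the iterated independent enlargement, the corollary falls out immediately. The only point requiring a minute of care is the preservation of the local martingale property of the components $X\p i$ in $\Gbb$, and this is handled either by Lemma \ref{lem:prop} (ii) or by inspection of the $\Gbb$-characteristics provided by Theorem \ref{thm:prop.seq.en} (ii).
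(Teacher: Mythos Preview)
Your proposal is correct and follows essentially the same route as the paper: convert PRP to WRP via Proposition \ref{prop:WRP.PRP}, then apply Theorem \ref{thm:prop.seq.en} (iii), with the $\Gbb$-local martingale property of each $X\p i$ coming from Lemma \ref{lem:prop} (ii) (the paper phrases this as ``induction and Lemma \ref{lem:prop}'', which amounts to your decomposition $\Gbb=\Fbb\p i\vee\Hbb\p i$). One small caveat on your alternative argument via characteristics: for a local martingale the first characteristic $B$ need not vanish; the correct criterion is $B+(x-h(x))\ast\nu=0$, but since both $B$ and $\nu$ are preserved under the enlargement by Theorem \ref{thm:prop.seq.en} (ii), the conclusion still holds.
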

\begin{proof}
Because of the independence, we obtain by induction and Lemma \ref{lem:prop}, that $(X\p i,\Gbb)$ are local martingales, $i=1,\ldots,n$. From Proposition \ref{prop:WRP.PRP} and Theorem \ref{thm:prop.seq.en} (iii), we immediately get the WRP of $Z$ with respect to $\Gbb$. The proof of the corollary is complete. 
\end{proof}

\subsection{Jacod's Equivalence Hypothesis}\label{subs:jeq}
Let $(X,\Fbb)$ be an $\Rbb\p d$-valued semimartingale and let $(B\p X,C\p X,\nu\p X)$ be the $\Fbb$-predictable characteristics of $X$. We assume that $\Fbb$ satisfies the usual conditions. Let $\tau:\Om\longrightarrow[0,+\infty]$ be a random time. We stress that $\tau$ is a random variable but it is not necessarily an $\Fbb$-stopping time. We denote by $H=1_{[\tau,+\infty)}$ the default process associated with $\tau$ and by $\Hbb$ the smallest filtration satisfying the usual conditions such that $H$ is $\Hbb$-adapted. We stress that, being a point process, $H$ possesses the WRP with respect to $\Hbb$. In this part we do not assume that $\tau$ and $\Fbb$ (i.e., that $\Hbb$ and $\Fbb$) are independent. We rather work under the following assumption (see, e.g., \cite[Definition 4.13]{AJ17}):
\begin{assumption}[Jacod's equivalence hypothesis]\label{ass:ja.eq} Let $F_\tau$ denote the law of $\tau$. The regular conditional distribution of $\tau$ given $\Fscr_t$ is equivalent to the distribution of $\tau$, that is, if $P_t(\cdot,A)$ denotes a version of $\Pbb[\tau\in A|\Fscr_t]$, $A\in\Bscr([0,+\infty])$, we have:
\[
P_t(\rmd u)\sim F_\tau(\rmd u),\quad\textnormal{a.s.}, 
\] 
where the symbol $\sim$ denotes the equivalence of the two measures. 
\end{assumption}
We denote by $\Gbb$ the smallest filtration containing $\Fbb$ and $\Hbb$, that is, $\Gbb=\Fbb\vee\Hbb$ is the progressive enlargement of $\Fbb$ by $\tau$. We notice that $\Gbb$ obviously coincides with the smallest filtration containing $\Fbb$ and such that $\tau$ is a $\Gbb$-stopping time.

In Assumption \ref{ass:ja.eq} we do not require the continuity of $F_\tau$. Therefore,  if $\tau$ satisfies Assumption \ref{ass:ja.eq}, then $\tau$ need not avoid $\Fbb$-stopping times, i.e., it can happen that $\Pbb[\tau=\sig<+\infty]>0$ for some $\Fbb$-stopping time $\sig$. If we require the continuity of $F_\tau$, we then have $\Pbb[\tau=\sig<+\infty]=0$, for every $\Fbb$-stopping time $\sig$ (see \cite[Corollary 2.2]{EJJ10}).

The propagation of the PRP for an $\Rbb$-valued local martingale $(X,\Fbb)$ to the progressive enlargement $\Gbb$ of $\Fbb$ by a random time $\tau$ satisfying Jacod's equivalence hypothesis, has been investigated in \cite{CJZ13} under the additional condition that $F_\tau$ is continuous. As a consequence of Theorem \ref{thm:mar.rep.G}, we are now going to show that under Assumption \ref{ass:ja.eq} the WRP propagates to $\Gbb$, also without the further continuity assumption on $F_\tau$. 

It is well-known that under Assumption \ref{ass:ja.eq} the following properties hold:
\begin{itemize}
\item $\Fbb$-semimartingales remain $\Gbb$-semimartingale (see \cite[Theorem 5.30]{AJ17}).  
\item There exists a version of the density of $\Pbb[\tau\in\rmd u|\Fscr_t]$ with respect to $F_\tau(\rmd u)$, denoted by $p_t(\om,u)$, such that $\Rbb_+\times\Om\times\Rbb_+\mapsto p_t(\om,u)$ is a strictly positive $\wt\Oscr(\Fbb)$-measurable function and, for every fixed $u\in\Rbb_+$, the process $(t,\om)\mapsto p_t(\om,u)$ is an $\Fbb$-martingale  (see \cite[Theorem 4.17]{AJ17} or \cite[Lemma 2.2]{A00}).
\item The process $L:=p_0(\tau)/p_\cdot(\tau)$ is a strictly positive $\Gbb\p\tau$-martingale, $\Gbb\p\tau$ denoting the \emph{initial enlargement} of $\Fbb$ by $\tau$, such that $L_0=1$ (see \cite[Theorem 4.37]{AJ17}).
\end{itemize} 
Using the process $L$ defined above as a density, according to \cite[Theorem 4.37]{AJ17}, for every arbitrary but fixed \emph{deterministic} time $T>0$, we can define the probability measure $\Qbb$ on $\Gscr_T\p\tau$ by setting
$\rmd\Qbb\big|_{\Gscr_T\p\tau}:=L_T\rmd\Pbb\big|_{\Gscr_T\p\tau}$ which has the following properties:
\begin{enumerate}
\item[(P1)] $\Qbb\big|_{\Gscr_T\p\tau}\sim\Pbb\big|_{\Gscr_T\p\tau}$.
\item[(P2)] $\Qbb\big|_{\Fscr_T}=\Pbb\big|_{\Fscr_T}$ and $\Qbb\big|_{\sigma(\tau)}=\Pbb\big|_{\sigma(\tau)}$.
\item[(P3)] $\Fscr_T$ and $\sig(\tau)$ are conditionally independent given $\Fscr_0$.
\end{enumerate}
We stress that (P1) only holds on $\Gscr\p\tau_T$ and not on $\Gscr\p\tau_\infty$ because, in general, $L$ is not a uniformly integrable martingale (see \cite[Remark 4.38]{AJ17}).

Since the inclusions $\Fscr_t\subseteq\Gscr_t\subseteq\Gscr_t\p\tau$, $t\in[0,T]$, hold, $\Fscr_T$ containing all the $\Pbb$-null sets of $\Fscr$, we also get $\Qbb\big|_{\Gscr_T}\sim\Pbb\big|_{\Gscr_T}$, $\Qbb\big|_{\Hscr_T}=\Pbb\big|_{\Hscr_T}$. Furthermore,  $\Fscr_T$ and $\Hscr_T\subseteq\sig(\tau)$ are conditionally independent given $\Fscr_0$.
Notice that, from \cite[Lemma 2.10]{CJZ13} and the comment after the proof therein, we obtain $\rmd\Qbb\big|_{\Gscr_T}=\ell_T\rmd\Pbb\big|_{\Gscr_T}$, where $\ell$ is a $\Gbb$-martingale.

For the remaining part of this section, $\Qbb$ will denote the equivalent probability measure described above.

Let $T>0$ be an arbitrary but fixed deterministic time. We denote $\Fbb\p T:=(\Fscr_{t\wedge T})_{t\ge0}=(\Fscr_t)_{t\in[0,T]}$ and  $\Hbb\p T:=(\Hscr_{t\wedge T})_{t\ge0}=(\Hscr_t)_{t\in[0,T]}$. 

\begin{lemma}\label{lem:righ.con}
Let $\Fscr_0$ be trivial. Then $\Gbb\p T:=\Fbb\p T\vee\Hbb\p T$ is right-continuous.
\end{lemma}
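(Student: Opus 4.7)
The strategy is to transfer the problem to an equivalent probability measure under which the two filtrations involved become independent, and then to invoke Lemma \ref{lem:prop} (i).

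First, I would fix $t\in[0,T]$ and note the inclusions $\Gscr^T_t\subseteq\Gscr^\tau_t\subseteq\Gscr^\tau_T$. Since property (P1) gives $\Qbb\sim\Pbb$ on $\Gscr^\tau_T$, the $\Pbb$-negligible and $\Qbb$-negligible sets of $\Gscr^\tau_T$ coincide, so the usual augmentation used to define $\Gbb^T$ (as well as $\Fbb^T$ and $\Hbb^T$) is the same under either measure. Therefore right-continuity of $\Gbb^T$ on $[0,T]$ under $\Pbb$ is equivalent to right-continuity of $\Gbb^T$ on $[0,T]$ under $\Qbb$, and it suffices to prove the latter.

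Next, I would check that under $\Qbb$ the filtrations $\Fbb^T$ and $\Hbb^T$ fulfill Assumption \ref{ass:indep}. By (P3), $\Fscr_T$ and $\sig(\tau)$ are $\Qbb$-conditionally independent given $\Fscr_0$; since by (P2) we have $\Qbb|_{\Fscr_T}=\Pbb|_{\Fscr_T}$ and $\Fscr_0$ is $\Pbb$-trivial by assumption, $\Fscr_0$ is also $\Qbb$-trivial, so this conditional independence reduces to ordinary independence. Because $\Hbb$ is the smallest filtration satisfying the usual conditions such that $H=1_{[\tau,+\infty)}$ is adapted, we have $\Hscr_T\subseteq\sig(\tau)\vee\Nscr$, where $\Nscr$ denotes the $\Pbb$-null sets; thus $\Fscr^T_\infty=\Fscr_T$ and $\Hscr^T_\infty=\Hscr_T$ are $\Qbb$-independent.

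Finally, since $\Fbb^T$ and $\Hbb^T$ inherit right-continuity from $\Fbb$ and $\Hbb$ under $\Pbb$ and therefore, by the first step, under $\Qbb$, Lemma \ref{lem:prop} (i) applies to the pair $(\Fbb^T,\Hbb^T)$ under $\Qbb$ and yields the right-continuity of $\Gbb^T=\Fbb^T\vee\Hbb^T$ under $\Qbb$. Transferring back to $\Pbb$ via the first step concludes the proof. The only subtle point I anticipate is the careful bookkeeping around null sets: one must ensure that the completions under $\Pbb$ and under $\Qbb$ agree on $\Gscr^\tau_T$, which is precisely what (P1) provides, so that right-continuity is an intrinsic set-theoretic property of the filtration independent of the choice of equivalent measure.
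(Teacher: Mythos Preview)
Your proposal is correct and follows essentially the same route as the paper: pass to $\Qbb$, use (P3) together with the triviality of $\Fscr_0$ to get independence of $\Fbb^T$ and $\Hbb^T$, and invoke Lemma~\ref{lem:prop}~(i). Your additional bookkeeping about transferring right-continuity between $\Pbb$ and $\Qbb$ is harmless but unnecessary, since right-continuity of a filtration is a purely set-theoretic statement ($\Gscr^T_t=\bigcap_{s>t}\Gscr^T_s$) that does not depend on the underlying probability measure at all; the only measure-dependent ingredient is the independence hypothesis of Lemma~\ref{lem:prop}, which you correctly verify under $\Qbb$.
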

\begin{proof}
Because of (P3) and the triviality of $\Fscr_0$, we have that $\Fbb\p T$ and $\Hbb\p T$ are independent under $\Qbb$. Hence, we get the right-continuity of $\Gbb\p T$ by Lemma \ref{lem:prop}. The proof of the lemma is complete.
\end{proof}
\begin{theorem}\label{thm:wrp.eq}
Let $\tau$ satisfy Assumption \ref{ass:ja.eq} and let $(X,\Fbb)$ be an $\Rbb\p d$-valued semimartingale. Let us furthermore assume that $\Fscr_0$ is trivial and that $X$ possesses the \emph{WRP} with respect to $\Fbb$. Then, the $\Rbb\p d\times\Rbb_+$-valued $\Gbb$-semimartingale $Z=(X,H)\p{tr}$ possesses the \emph{WRP} with respect to $\Gbb\p T$.
\end{theorem}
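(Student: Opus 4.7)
The difficulty here is that Theorem \ref{thm:mar.rep.G} cannot be applied directly under $\Pbb$, since under Jacod's equivalence hypothesis the filtrations $\Fbb$ and $\Hbb$ need \emph{not} be independent. The plan is to reduce to the independent case by passing to the auxiliary probability measure $\Qbb$ introduced before the statement: on $\Gscr_T\p\tau$ (and hence on $\Gscr_T$) we have $\Qbb\sim\Pbb$ with density process $\ell$, and by property (P3) combined with the triviality of $\Fscr_0$ the filtrations $\Fbb\p T$ and $\Hbb\p T$ are independent under $\Qbb$ (this is exactly the content of Lemma \ref{lem:righ.con}). The strategy is thus first to establish the WRP of $Z=(X,H)\p{tr}$ with respect to $\Gbb\p T$ \emph{under $\Qbb$} by means of Theorem \ref{thm:mar.rep.G}, and then to transfer it back to $\Pbb$ using the equivalence.

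To apply Theorem \ref{thm:mar.rep.G} under $\Qbb$, I need the WRP of each factor with respect to the corresponding sub-filtration under $\Qbb$. For $(X,\Fbb\p T)$, property (P2) yields $\Qbb|_{\Fscr_T}=\Pbb|_{\Fscr_T}$, so $X$ has the same $\Fbb\p T$-semimartingale characteristics under both measures and, in particular, retains the WRP with respect to $\Fbb\p T$ under $\Qbb$. For $(H,\Hbb\p T)$, the inclusion $\Hscr_T\subseteq\sig(\tau)$ combined with (P2) gives $\Qbb|_{\Hscr_T}=\Pbb|_{\Hscr_T}$; being a point process, $H$ possesses the WRP with respect to $\Hbb\p T$, as was already recalled at the beginning of this subsection. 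Since $\Fbb\p T$ and $\Hbb\p T$ are $\Qbb$-independent, Theorem \ref{thm:mar.rep.G} applies and shows that $Z=(X,H)\p{tr}$ possesses the WRP with respect to $\Gbb\p T$ under $\Qbb$.

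The crucial and most delicate step, which I expect to be the main obstacle, is the transfer of the WRP from $\Qbb$ to $\Pbb$ on $\Gscr_T$. The idea is the classical stability of the WRP under an equivalent change of measure, implemented as follows. For any bounded $\Pbb$-$\Gbb\p T$-martingale $N$, Bayes' formula shows that $N/\ell$ is a $\Qbb$-$\Gbb\p T$-local martingale, so by the WRP just obtained under $\Qbb$ it admits a representation as a stochastic integral with respect to the $\Qbb$-continuous martingale part of $Z$ and $\mu\p Z-\nu\p{Z,\Qbb}$. The positive $\Pbb$-martingale $\ell$ itself has reciprocal $1/\ell$ which is a $\Qbb$-local martingale, and thus admits an analogous representation driven by $Z$ under $\Qbb$. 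One then combines these two facts via the integration-by-parts formula applied to $N=\ell\cdot(N/\ell)$, invoking the Girsanov-type formulae that relate the $\Pbb$- and $\Qbb$-characteristics of $Z$ (the jump measure $\mu\p Z$ being pathwise invariant), to reassemble an explicit representation of $N$ with respect to the $\Pbb$-continuous martingale part of $Z$ and $\mu\p Z-\nu\p{Z,\Pbb}$. The extension from bounded martingales to arbitrary $N\in\Hscr\p1_{\mathrm{loc}}(\Pbb,\Gbb\p T)$ then proceeds exactly as in Steps 5--6 of the proof of Theorem \ref{thm:mar.rep.G}, via a monotone-class argument and localization. The main technical burden lies in the careful bookkeeping of the characteristic-change formulae under Girsanov and in checking that the integrands so produced belong to $\Lrm\p1_{\mathrm{loc}}(Z\p c,\Gbb\p T)$ and $\Gloc\p1(\mu\p Z,\Gbb\p T)$ under $\Pbb$.
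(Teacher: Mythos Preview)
Your overall strategy is exactly that of the paper: pass to $\Qbb$, verify the WRP of $X$ with respect to $\Fbb^T$ (via (P2)) and of $H$ with respect to $\Hbb^T$, use (P3) together with the triviality of $\Fscr_0$ to obtain independence under $\Qbb$, apply Theorem~\ref{thm:mar.rep.G} under $\Qbb$, and then transfer the WRP back to $\Pbb$ via the equivalence $\Qbb|_{\Gscr_T}\sim\Pbb|_{\Gscr_T}$.

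The only place where you diverge from the paper is the last step, which you flag as ``crucial and most delicate'' and propose to carry out by hand via Bayes' formula, integration by parts, and Girsanov bookkeeping. This is not needed: the stability of the WRP under an equivalent change of measure is a standard result, and the paper simply invokes \cite[Theorem III.5.24]{JS00} at this point. Your outlined argument is essentially a reproof of that theorem and would work, but it is unnecessary labor; what you call the ``main obstacle'' is in fact a one-line citation.
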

\begin{proof}
Assumption \ref{ass:ja.eq} implies that $X$ is a $\Gbb$-semimartingale with respect to the probability measure $\Pbb$. Furthermore, since $\Qbb$ coincides with $\Pbb$ on $\Fscr_T$, we deduce that, under $\Qbb$, the $\Qbb$-semimartingale $X$ possesses the WRP with respect to $\Fbb\p T$. Furthermore, under $\Qbb$, $H$ possesses the WRP with respect to $\Hbb\p T$ (see \cite[Theorem III.4.37]{JS00}). Because of (P3) and the triviality of $\Fscr_0$, the filtrations $\Fbb\p  T$ and $\Hbb\p T$ are independent under $\Qbb$. We can therefore apply Theorem \ref{thm:mar.rep.G} under $\Qbb$ to obtain that the $\Gbb\p T$-semimartingale $Z$ possesses the WRP with respect to $\Gbb\p T$ under the measure $\Qbb$. So, by the equivalence of $\Qbb$ and $\Pbb$ on $\Gscr_T$, applying \cite[Theorem III.5.24]{JS00}, we obtain that $Z$ has the WRP with respect to $\Gbb\p T$ also under the original measure $\Pbb$. The proof of the theorem is complete.
\end{proof}

\appendix
\section{Technical Results and Proofs}\label{sec:tech.res}
\begin{lemma}\label{lem:multidim.stoc.int.itegrab} Let $q\geq1$. Let $X=(X\p1,\ldots,X\p d)\p{tr}$ be such that $X\p i\in\Hloc\p q(\Fbb)$, $i=1,\ldots,d$, and let $K\in\Lrm_\mathrm{loc}\p1(X)$. Then, $K\cdot X\in\Hscr\p q(\Fbb)$ if and only if $K\in\Lrm\p q(X)$.
\end{lemma}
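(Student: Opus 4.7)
The plan is to reduce the statement to the Burkholder--Davis--Gundy (BDG) inequality by computing the quadratic variation of $K\cdot X$ explicitly in terms of the processes $a$ and $A$ from the decomposition $[X,X]=a\cdot A$ introduced in the excerpt. Recall that, for $K\in \Lrm\p1_\mathrm{loc}(X)$, the stochastic integral $K\cdot X$ is by definition an element of $\Hscr\p1_{\mathrm{loc},0}(\Fbb)$, so BDG applies and characterizes membership in $\Hscr\p q(\Fbb)$ through the $L\p q$-integrability of $[K\cdot X,K\cdot X]\p{1/2}_\infty$.

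First I would establish the identity
\[
[K\cdot X,K\cdot X]=(K\p{tr}\,a\,K)\cdot A.
\]
This is the standard multidimensional quadratic-variation formula: using bilinearity of the bracket and the defining property $[X\p i,X\p j]=a\p{i,j}\cdot A$, one has $[K\cdot X,K\cdot X]=\sum_{i,j}K\p iK\p j\cdot[X\p i,X\p j]=\sum_{i,j}K\p iK\p j a\p{i,j}\cdot A=(K\p{tr} a K)\cdot A$. The proof of this identity is essentially contained in \cite[Section 4.2 \S a-b]{J79} and \cite[III.4a]{JS00}, so I would simply cite it rather than redo the computation.

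Next I would invoke BDG: for every local martingale $N\in\Hscr\p1_{\mathrm{loc},0}(\Fbb)$ and every $q\geq1$, there exist universal constants $c_q,C_q>0$ such that
\[
c_q\,\Ebb\bigl[[N,N]\p{q/2}_\infty\bigr]\leq\Ebb\Bigl[\sup_{t\geq0}|N_t|\p q\Bigr]\leq C_q\,\Ebb\bigl[[N,N]\p{q/2}_\infty\bigr].
\]
Applying this to $N=K\cdot X$ together with the identity above yields
\[
K\cdot X\in\Hscr\p q(\Fbb)\iff \Ebb\bigl[[K\cdot X,K\cdot X]\p{q/2}_\infty\bigr]<+\infty\iff \Ebb\bigl[((K\p{tr} a K)\cdot A_\infty)\p{q/2}\bigr]<+\infty,
\]
and the last condition is precisely $K\in\Lrm\p q(X)$ by the definition recalled in the excerpt.

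There is no real obstacle here: the only delicate point is making sure that the bilinear computation of $[K\cdot X,K\cdot X]$ is justified when only $K\in\Lrm\p 1_\mathrm{loc}(X)$ is assumed a priori (so that both sides of the claimed equality are well-defined, possibly $+\infty$ in the integral of the right-hand side). This is handled by localization: choose a localizing sequence $(T_n)$ reducing $K\cdot X$ and each $X\p i$ to elements of $\Hscr\p q(\Fbb)$ and such that the stopped processes $K1_{\lsi 0,T_n\rsi}$ lie in $\Lrm\p 1(X)$; the identity $[K\cdot X,K\cdot X]=(K\p{tr}aK)\cdot A$ holds before stopping by the general formula in \cite{J79}, and both sides of the equivalence are stable under passage to the limit $n\to\infty$ by monotone convergence.
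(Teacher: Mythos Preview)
Your proposal is correct and follows exactly the same approach as the paper: cite the identity $[K\cdot X,K\cdot X]=(K\p{tr}aK)\cdot A$ from \cite{J79} and then apply Burkholder--Davis--Gundy's inequality. The paper's proof is in fact a two-line version of yours, citing \cite[Eq.\ (4.57) and Remark 4.61 2)]{J79} for the quadratic-variation identity and omitting the localization discussion you spelled out.
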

\begin{proof}
Let $K\in\Lrm_\mathrm{loc}\p1(X)$. Then, it is known (see \cite[Eq.\ (4.57) and Remark 4.61 2)]{J79}) that the identity $[K\cdot X,K\cdot K]=(K\p{tr}aK)\cdot A$. The claim is therefore an immediate consequence of Burkholder--Davis--Gundy's inequality. The proof of the lemma is complete. 
\end{proof}

\begin{proof}[Proof of Proposition \ref{prop:WRP.PRP}]
Let $K\in\Lrm\p1(X)$. We preliminary observe that, from \cite[Theorem 4.68]{J79}, we have $K\in L\p1(X\p c)$ and $(K\cdot X)\p c=K\cdot X\p c$. Therefore, setting $X\p d:=X-X\p c-X_0$, we also get $K\in\Lrm\p1(X\p d)$ and $(K\cdot X)\p d=K\cdot X\p d$. Let now $\Hscr\p{1,d}(\Fbb)$ denote the space of purely-discontinuous martingales in $\Hscr\p{1}(\Fbb)$ (see \cite[Definition I.4.11b)]{JS00} for the definition of a purely discontinuous local martingale). We are going to show the inclusion $\Hscr\p{1,d}(\Fbb)\subseteq\{H\cdot X\p d,\ H\in L\p1(X\p d,\Fbb)\}$. So, let $M\in\Hscr\p{1,d}(\Fbb)$. Since $M_0=0$ by definition, from the PRP of $X$ with respect to $\Fbb$ and Lemma \ref{lem:multidim.stoc.int.itegrab}, we have $M=\cdot X$, where $H\in L\p1(X,\Fbb)$. Hence, by $M\p c=0$, we get $M=M\p d=(H\cdot X)\p d=H\cdot X\p d$ and $H\in L\p1(X\p d,\Fbb)$, that is, $M=H\cdot X\p d$, showing the claimed inclusion. This yields $\Hscr\p{1,d}(\Fbb)=\{W\ast(\mu\p X-\nu\p X),\ W\in\Gscr\p1(\mu\p X,\Fbb)\}$, by \cite[Theorem 4.80]{J79}. Therefore, if now $N\in\Hscr\p1(\Fbb)$ has a non-vanishing continuous part, from the PRP of $X$ with respect to $\Fbb$ and Lemma \ref{lem:multidim.stoc.int.itegrab}, we get 
\[N=N_0+H\cdot X=N_0+H\cdot X\p c+H\cdot X\p d=N_0+H\cdot X\p c+W\ast(\mu\p X-\nu\p X),\] where $ H\in L\p1(X\p c,\Fbb)$ and $W\in\Gscr\p1(\mu\p X,\Fbb)$. This means that $X$ has the WRP with respect to $\Fbb$. The proof of the proposition is complete.
\end{proof}

\begin{lemma}\label{lem:WRP.lim}
Let $(\xi\p n)_n\subseteq L\p2(\Om,\Fscr_\infty,\Pbb)$, $\xi\p n\longrightarrow\xi$ in $L\p2(\Om,\Fscr_\infty,\Pbb)$ as $n\rightarrow+\infty$. If 
\[
\xi\p n=\Ebb[\xi\p n|\Fscr_0]+K\p n\cdot X\p c_\infty+W\p n\ast(\mu\p X-\nu\p X)_\infty,\quad K\p n\in\Lrm\p2(X\p c,\Fbb),\quad W\p n\in\Gscr\p2(\mu\p X,\Fbb),
\]
then there exist $K\in\Lrm\p2(X\p c)$ and $W\in\Gscr\p2(\mu\p X)$ such that
\[
\xi=\Ebb[\xi|\Fscr_0]+K\cdot X\p c_\infty+W\ast(\mu\p X-\nu\p X)_\infty.
\]
\end{lemma}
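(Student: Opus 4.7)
The plan is to translate the convergence $\xi^n\to\xi$ in $L^2(\Om,\Fscr_\infty,\Pbb)$ into convergence of the associated $\Fbb$-square-integrable martingales, then exploit the orthogonality of the continuous and purely-discontinuous parts together with the isometric nature of the two stochastic integral maps in order to identify $K$ and $W$ as limits of $K^n$ and $W^n$.

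First I would introduce the $\Fbb$-martingales $M\p n_t:=\Ebb[\xi\p n|\Fscr_t]$ and $M_t:=\Ebb[\xi|\Fscr_t]$. By the standard isometry between $\Hscr\p2(\Fbb)$ and $L\p2(\Om,\Fscr_\infty,\Pbb)$ (given by $S\mapsto S_\infty$, modulo the identification of terminal values), the assumed convergence yields $M\p n\to M$ in $\Hscr\p2(\Fbb)$; in particular $M\p n_0\to M_0=\Ebb[\xi|\Fscr_0]$ in $L\p2$. By hypothesis, each $M\p n$ decomposes as $M\p n=M\p n_0+N\p n+R\p n$, where $N\p n:=K\p n\cdot X\p c$ belongs to $\Hscr\p{2,c}(\Fbb)$ and $R\p n:=W\p n\ast(\mu\p X-\nu\p X)$ belongs to $\Hscr\p{2,d}(\Fbb)$. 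Since the subspaces of continuous and purely-discontinuous square-integrable martingales vanishing at $0$ are orthogonal in $\Hscr\p2(\Fbb)$, the convergence $M\p n-M\p n_0\to M-M_0$ in $\Hscr\p2(\Fbb)$ decouples into separate convergences $N\p n\to N$ in $\Hscr\p{2,c}(\Fbb)$ and $R\p n\to R$ in $\Hscr\p{2,d}(\Fbb)$, with $M=M_0+N+R$.

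Next I would identify $N$ and $R$ as stochastic integrals. For the continuous part, the map $K\mapsto K\cdot X\p c$ is an isometry from $\Lrm\p2(X\p c,\Fbb)$ into $\Hscr\p{2,c}(\Fbb)$ with closed image (this follows from $\|K\cdot X\p c\|\p2_{\Hscr\p2}=\Ebb[(K\p{tr}aK)\cdot A_\infty]=\|K\|\p2_{\Lrm\p2(X\p c)}$; cf.\ Lemma \ref{lem:multidim.stoc.int.itegrab}). Hence $(K\p n)$ is Cauchy in $\Lrm\p2(X\p c,\Fbb)$ and converges to some $K\in\Lrm\p2(X\p c,\Fbb)$ with $N=K\cdot X\p c$. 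For the purely-discontinuous part, the map $W\mapsto W\ast(\mu\p X-\nu\p X)$ is (quotiented by its kernel) an isometry from $\Gscr\p2(\mu\p X,\Fbb)$ into $\Hscr\p{2,d}(\Fbb)$, via $\|W\ast(\mu\p X-\nu\p X)\|_{\Hscr\p2}=\|W\|_{\Gscr\p2(\mu\p X)}$ (this is the content of \cite[Proposition 3.66]{J79}, already invoked in Section \ref{sec:bas}). Thus $(W\p n)$ is Cauchy in $\Gscr\p2(\mu\p X,\Fbb)/\ker$, and by completeness of that image one finds $W\in\Gscr\p2(\mu\p X,\Fbb)$ with $R=W\ast(\mu\p X-\nu\p X)$.

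Evaluating the identity $M=M_0+K\cdot X\p c+W\ast(\mu\p X-\nu\p X)$ at $t=\infty$ (using the martingale convergence theorem, which applies since $M\in\Hscr\p2(\Fbb)$) yields the desired representation for $\xi$. The main technical point is the closedness/completeness of the image of $\Gscr\p2(\mu\p X,\Fbb)$ under the stochastic integral map, which is what allows one to extract the limit $W$; everything else is routine orthogonal decomposition in the Hilbert space $\Hscr\p2(\Fbb)$. No additional work is needed to verify that the limiting integrands inherit predictability since each $\Lrm\p2$ and $\Gscr\p2$ class is closed under $L\p2$-limits by construction.
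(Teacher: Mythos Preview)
Your proposal is correct and follows essentially the same route as the paper's proof: both arguments use the orthogonality of the three components $\Ebb[\xi\p n|\Fscr_0]$, $K\p n\cdot X\p c_\infty$ and $W\p n\ast(\mu\p X-\nu\p X)_\infty$ in $L\p2$ to deduce that $(K\p n)$ and $(W\p n)$ are Cauchy in $\Lrm\p2(X\p c)$ and $\Gscr\p2(\mu\p X)$ respectively, then invoke the isometry of the stochastic integral maps and completeness to produce the limiting integrands. The paper simply writes the Pythagorean identity for $\Ebb[(\xi\p n-\xi\p m)\p2]$ directly, whereas you pass through the associated $\Hscr\p2(\Fbb)$-martingales and the continuous/purely-discontinuous decomposition; this is only a cosmetic difference in presentation.
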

\begin{proof}
We have
\[
\Ebb[(\xi\p n-\xi\p m)\p2]=\Ebb[\Ebb[\xi\p n-\xi\p m|\Fscr_0]\p2]+\Ebb[((K\p n-K\p m)\cdot X\p c_\infty)\p2]+\Ebb[((W\p n-W\p m)\ast(\mu\p X-\nu\p X)_\infty)\p2].
\]
So, by the isometry, $(K\p n)_{n\geq1}$ is a Cauchy sequence in $(\Lrm\p2(X\p c),\|\cdot\|_{\Lrm\p2(X\p c)})$ and $(W\p n)_{n\geq1}$ is a Cauchy sequence in $(\Gscr\p2(\mu\p X),\|\cdot\|_{\Gscr\p2(\mu\p X)})$. Therefore, we find $K\in \Lrm\p2(X\p c)$ and $W\in\Gscr\p2(\mu\p X)$ such that $K\p n\longrightarrow K$ in $\Lrm\p2(X\p c)$ and $W\p n\longrightarrow W$ in $\Gscr\p2(\mu\p X)$ as $n\rightarrow+\infty$, respectively. Considering now the stochastic integrals $K\cdot X\p c$ and $W\ast(\mu\p X-\nu\p X)$, we see that $\xi=\Ebb[\xi|\Fscr_0]+K\cdot X\p c_\infty+W\ast(\mu\p X-\nu\p X)_\infty$ holds. The proof of the lemma is complete.
\end{proof}
We define the functions $g_1,g_2:\Rbb\p {d}\times\Rbb\p {\ell}\longrightarrow\Rbb$ by $g_1(x,y):=1_{\{x\neq0\}}$ and $g_2(x,y):=1_{\{y\neq0\}}$, for every $(x,y)\in\Rbb\p {d}\times\Rbb\p {\ell}$.
\begin{lemma}\label{lem:What}
Let $\Dbb=(\Dscr_t)_{t\geq0}$ be a right-continuous filtration. Let $(X,\Dbb)$ be an $\Rbb\p {d}$-valued semimartingale and let $(Y,\Dbb)$ be an $\Rbb\p\ell$-valued semimartingale. Let us consider the $\Rbb\p{d}\times\Rbb\p{\ell}$-valued $\Dbb$-semimartingale $Z=(X,Y)\p{tr}$. Then, for every $W\in\Gloc\p1(\mu\p {X},\Dbb)$ and for every $V\in\Gloc\p1(\mu\p {Y},\Dbb)$ we have:

\textnormal{(i)}  The inclusion $Wg_1,Vg_2\in\Gloc\p1(\mu\p{Z},\Dbb)$ holds.

\textnormal{(ii)} The identities $Wg_1\ast(\mu\p{Z}-\nu\p{Z})=W\ast(\mu\p{X}-\nu\p{X})$ and $Vg_2\ast(\mu\p{Z}-\nu\p{Z})=V\ast(\mu\p{Y}-\nu\p{Y})$ hold.
\end{lemma}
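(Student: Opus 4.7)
The plan is to reduce both (i) and (ii) to the single identity
\[
\wt{(Wg_1)}\p{Z,\Dbb} = \wt W\p{X,\Dbb}
\]
up to an evanescent set, from which everything else follows with essentially no further work. The symmetric statement for $Vg_2$ will then follow by interchanging the roles of $X$ and $Y$.

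The first step is the pointwise observation that, for every $(\om,s)$,
\[
W(\om,s,\Delta X_s(\om))\,g_1(\Delta X_s(\om),\Delta Y_s(\om))\,1_{\{\Delta Z_s(\om)\neq 0\}} \;=\; W(\om,s,\Delta X_s(\om))\,1_{\{\Delta X_s(\om)\neq 0\}},
\]
because $g_1(x,y)=1_{\{x\neq 0\}}$ already forces $\Delta X_s\neq 0$ and hence $\Delta Z_s\neq 0$. So the process $W(\cdot,\cdot,\Delta X)\,1_{\{\Delta X\neq 0\}}$ and the process $Wg_1(\cdot,\cdot,\Delta X,\Delta Y)\,1_{\{\Delta Z\neq 0\}}$ are literally equal.

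Next, by \cite[Lemma II.1.25]{JS00}, $\wh W\p{X,\Dbb}$ is a version of the $\Dbb$-predictable projection of $W(\cdot,\cdot,\Delta X)\,1_{\{\Delta X\neq 0\}}$, while $\wh{(Wg_1)}\p{Z,\Dbb}$ is a version of the $\Dbb$-predictable projection of $Wg_1(\cdot,\cdot,\Delta X,\Delta Y)\,1_{\{\Delta Z\neq 0\}}$. Since these two processes are indistinguishable, uniqueness of the predictable projection (up to indistinguishability) yields $\wh{(Wg_1)}\p{Z,\Dbb}=\wh W\p{X,\Dbb}$ up to an evanescent set. Substituting this into the defining formula \eqref{eq:def.wr} (applied once to $Z$ and once to $X$) produces the desired identity $\wt{(Wg_1)}\p{Z,\Dbb}=\wt W\p{X,\Dbb}$.

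With this identity in hand, part (i) is immediate: the membership $W\in\Gloc\p1(\mu\p X,\Dbb)$ means that $\bigl(\sum_{s\leq\cdot}(\wt W\p{X,\Dbb}_s)\p2\bigr)\p{1/2}\in\Ascr\p+_\mathrm{loc}$, and the equality of tilde processes transfers this verbatim to $Wg_1$ and $\mu\p Z$, giving $Wg_1\in\Gloc\p1(\mu\p Z,\Dbb)$. For part (ii), both $Wg_1\ast(\mu\p Z-\nu\p Z)$ and $W\ast(\mu\p X-\nu\p X)$ are, by construction, purely discontinuous $\Dbb$-local martingales starting at $0$ with jump processes equal to $\wt{(Wg_1)}\p{Z,\Dbb}$ and $\wt W\p{X,\Dbb}$, respectively. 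Since these jump processes coincide up to an evanescent set, the uniqueness characterisation of the stochastic integral (\cite[Definition II.1.27 and Corollary I.4.19]{JS00}) forces the two local martingales to be indistinguishable. The argument for $Vg_2$ is entirely symmetric, with $g_2$ instead of $g_1$ and the roles of $X$ and $Y$ interchanged.

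There is no real obstacle here; the only point to be careful about is the bookkeeping at jump times, namely checking that the factor $g_1$ (respectively $g_2$) correctly converts the indicator $1_{\{\Delta Z\neq 0\}}$ appearing in the definition of $\wt{(\cdot)}\p{Z,\Dbb}$ into the indicator $1_{\{\Delta X\neq 0\}}$ (respectively $1_{\{\Delta Y\neq 0\}}$) appearing in the definition of $\wt{(\cdot)}\p{X,\Dbb}$ (respectively $\wt{(\cdot)}\p{Y,\Dbb}$).
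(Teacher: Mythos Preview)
Your proposal is correct and follows essentially the same route as the paper's proof: establish $\wt{(Wg_1)}\p{Z,\Dbb}=\wt W\p{X,\Dbb}$ via the equality $\wh{(Wg_1)}\p{Z,\Dbb}=\wh W\p{X,\Dbb}$, then read off (i) from the definition of $\Gloc\p1$ and (ii) from \cite[Corollary I.4.19]{JS00}. The only cosmetic difference is that the paper verifies the hat identity by evaluating both sides at an arbitrary finite-valued predictable stopping time (as conditional expectations) and then invokes the predictable section theorem, whereas you invoke \cite[Lemma II.1.25]{JS00} and uniqueness of the predictable projection directly; these are two phrasings of the same argument.
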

\begin{proof}
We only show (i) and (ii) for $W\in\Gloc\p1(\mu\p{X},\Dbb)$, the proof for $V\in\Gloc\p1(\mu\p{Y},\Dbb)$ being completely analogous. First, we verify (i). Since $W\in\Gloc\p1(\mu\p {X},\Dbb)$, the predictable process $\wh W\p{X}$ is finite-valued and a version  of the $\Dbb$-predictable projection of the process $W(\cdot,\cdot,\Delta X)1_{\{\Delta X\neq0\}}$. 
Furthermore, we observe that the process $\wh{{Wg_1}}\p{Z}$ is always defined and $\Dbb$-predictable. Because of $\{\Delta X\neq0\}\subseteq\{\Delta Z\neq0\}$, we obviously have $1_{\{\Delta X\neq0\}}1_{\{\Delta Z\neq0\}}=1_{\{\Delta X\neq0\}}$. Hence, for every finite-valued $\Dbb$-predictable stopping time $T$, we have
 \[
\wh{W }\p{X}_T=\Ebb[W(T,\Delta X_T)1_{\{\Delta X_T\neq0\}}|\Dscr_{T-}]=\Ebb[W(T,\Delta X_T)g_1(\Delta X_T,\Delta Y_T)1_{\{\Delta Z_T\neq0\}}|\Dscr_{T-}]=\wh{{Wg_1}}_T\p{Z}\quad \textnormal{a.s.}
\]
Therefore, as a consequence of the predictable section theorem (see, e.g., \cite[Theorem IV.86]{DM82}), we deduce that $\wh W\p {X}$ and $\wh{Wg_1}\p{Z}$ are indistinguishable. Using the definition of $\wt{Wg_1}\p{Z}$, we now get
\begin{equation}\label{eq:idWt}
\begin{split}
\wt{Wg_1}\p{Z}(\om,t)&=W(\om,t,\Delta X_t(\om))g_1(\Delta X_t(\om),\Delta Y_t(\om))1_{\{\Delta Z_t(\om)\neq0\}}-\wh{Wg_1}\p{Z}(\om,t)\\&=\wt W\p{X}(\om,t),\quad \textnormal{ for every }\ t\geq0\ \textnormal{ a.s.}
\end{split}
\end{equation}
From \eqref{eq:idWt}, we deduce that $Wg_1\in\Gloc\p1(\mu\p{Z},\Dbb)$ holds. The proof of (i) is complete. We now verify (ii). Because of \ref{eq:idWt}, we have
\[
\Delta Wg_1\ast(\mu\p Z-\nu\p Z)=\wt{Wg_1}\p{Z}=\wt W\p{X}=\Delta W\ast(\mu\p{X}-\nu\p{X}),
\]
where the equalities have to be understood in the sense of indistinguishability. Hence, $W\ast(\mu\p{X}-\nu\p{X})$ and $Wg_1\ast(\mu\p{Z}-\nu\p{Z})$ are purely discontinuous local martingales with indistinguishable jumps. By \cite[Corollary I.4.19]{JS00}, this shows (ii). The proof of the lemma is complete.
\end{proof}
\begin{lemma}\label{lem:prop}
Let $\Fbb$ and $\Hbb$ be two independent right-continuous filtrations and set $\Gbb=\Fbb\vee\Hbb$. 

\textnormal{(i)} The filtration $\Gbb$ is right continuous.

\textnormal{(ii)} All $\Fbb$-local martingales (resp.\ $\Hbb$-local martingales) are $\Gbb$-local martingales.

\textnormal{(iii)} For every $\Fbb$-local martingale $M$ and for every $\Hbb$-local martingale $N$, the product $MN$ is a $\Gbb$-local martingale.
\end{lemma}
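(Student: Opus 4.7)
The engine of all three parts is a single conditional-expectation identity built from the independence of $\Fscr_\infty$ and $\Hscr_\infty$. Namely, for any $\xi_1\in L\p\infty(\Om,\Fscr_\infty,\Pbb)$ and $\xi_2\in L\p\infty(\Om,\Hscr_\infty,\Pbb)$ and any $s\geq0$,
\begin{equation}\label{eq:planid}
\Ebb[\xi_1\xi_2\mid\Gscr_s]=\Ebb[\xi_1\mid\Fscr_s]\,\Ebb[\xi_2\mid\Hscr_s].
\end{equation}
My plan is to verify \eqref{eq:planid} by testing against generators of $\Gscr_s$: for $F\in\Fscr_s$, $H\in\Hscr_s$, independence of $\Fscr_\infty$ and $\Hscr_\infty$ gives $\Ebb[\xi_1\xi_2 1_F 1_H]=\Ebb[\xi_1 1_F]\Ebb[\xi_2 1_H]=\Ebb[\Ebb[\xi_1\mid\Fscr_s]\Ebb[\xi_2\mid\Hscr_s]1_F 1_H]$, and a monotone class argument extends this from rectangles $F\cap H$ to all of $\Gscr_s$. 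Setting $\xi_2\equiv1$ yields the useful corollary: for every integrable $\Fscr_\infty$-measurable $X$,
\begin{equation}\label{eq:planid2}
\Ebb[X\mid\Gscr_s]=\Ebb[X\mid\Fscr_s],
\end{equation}
and symmetrically if $X$ is $\Hscr_\infty$-measurable.

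For (i), apply \eqref{eq:planid} at the filtration index $t+$: by the right-continuity of $\Fbb$ and $\Hbb$,
\[
\Ebb[\xi_1\xi_2\mid\Gscr_{t+}]=\Ebb[\xi_1\mid\Fscr_{t+}]\Ebb[\xi_2\mid\Hscr_{t+}]=\Ebb[\xi_1\mid\Fscr_t]\Ebb[\xi_2\mid\Hscr_t]=\Ebb[\xi_1\xi_2\mid\Gscr_t],
\]
and since the products $\xi_1\xi_2$ (with $\xi_1$ bounded $\Fscr_\infty$-measurable, $\xi_2$ bounded $\Hscr_\infty$-measurable) are total in $L\p1(\Om,\Gscr_\infty,\Pbb)$, one more monotone class argument upgrades this to every bounded $\Gscr_\infty$-measurable $\xi$. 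Thus any $A\in\Gscr_{t+}$ satisfies $1_A=\Ebb[1_A\mid\Gscr_t]$ a.s., so $A\in\Gscr_t$ up to $\Pbb$-null sets, giving $\Gscr_{t+}=\Gscr_t$. For (ii), if $M$ is an $\Fbb$-martingale then $M_t$ is $\Fscr_\infty$-measurable and \eqref{eq:planid2} yields $\Ebb[M_t\mid\Gscr_s]=\Ebb[M_t\mid\Fscr_s]=M_s$; the local-martingale case follows because any $\Fbb$-stopping time is automatically a $\Gbb$-stopping time, so an $\Fbb$-localizing sequence works in $\Gbb$.

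For (iii), the plan is to first handle uniformly integrable martingales and then localize. If $M$ is a UI $\Fbb$-martingale and $N$ is a UI $\Hbb$-martingale, then $M_t$ is $\Fscr_\infty$-measurable and $N_t$ is $\Hscr_\infty$-measurable, so by independence $\Ebb[|M_tN_t|]=\Ebb[|M_t|]\Ebb[|N_t|]<+\infty$ and \eqref{eq:planid} gives $\Ebb[M_tN_t\mid\Gscr_s]=\Ebb[M_t\mid\Fscr_s]\Ebb[N_t\mid\Hscr_s]=M_sN_s$, so $MN\in\Hscr\p1(\Gbb)$. For the general case, pick $\Fbb$-stopping times $\sig_n\uparrow+\infty$ making $M\p{\sig_n}$ a UI $\Fbb$-martingale and $\Hbb$-stopping times $\tau_n\uparrow+\infty$ making $N\p{\tau_n}$ a UI $\Hbb$-martingale, and set $\pi_n:=\sig_n\wedge\tau_n$, which is a $\Gbb$-stopping time with $\pi_n\uparrow+\infty$. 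By the UI case, $M\p{\sig_n}N\p{\tau_n}$ is a $\Gbb$-martingale; since $\pi_n\leq\sig_n\wedge\tau_n$, one has $(MN)\p{\pi_n}=(M\p{\sig_n}N\p{\tau_n})\p{\pi_n}$, and the right-hand side is a $\Gbb$-martingale by optional stopping. Hence $MN\in\Mloc(\Gbb)$.

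The only subtle point is (i), where one must be careful that the monotone class extension really produces the equality $\Gscr_{t+}=\Gscr_t$ (rather than the weaker statement $\Gscr_{t+}\subseteq\Gscr_t$ modulo $\Pbb$-null sets); this is why the completeness of the underlying probability space and the fact that $\Fbb,\Hbb$ already contain all $\Pbb$-null sets of $\Fscr$ must be invoked. Parts (ii) and (iii) are then essentially formal consequences of \eqref{eq:planid2} and \eqref{eq:planid}, the only care being the localization step in (iii), which is harmless because the minimum of an $\Fbb$- and an $\Hbb$-stopping time is still a $\Gbb$-stopping time.
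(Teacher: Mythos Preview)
Your argument tracks the paper's closely for (ii) and (iii): the paper proves (ii) by the same conditional-independence computation $\Ebb[M_t\mid\Fscr_s\vee\Hscr_s]=\Ebb[M_t\mid\Fscr_s]$ and then localizes; for (iii) it likewise treats true martingales first via $\Ebb[M_tN_t\mid\Gscr_s]=M_sN_s$ and then localizes through $R_n=T_n\wedge S_n$, exactly as you do. The one organizational difference is that you distill everything into the single product identity \eqref{eq:planid}, whereas the paper argues each conditional expectation directly; this is cosmetic. One small overclaim: from $\Ebb[M_tN_t\mid\Gscr_s]=M_sN_s$ you conclude ``$MN\in\Hscr\p1(\Gbb)$'', but you have only shown $MN$ is a $\Gbb$-martingale, not that $\sup_t|M_tN_t|\in L\p1$. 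This is harmless, since your localization step only uses the martingale property plus ordinary optional stopping. (The paper, by contrast, localizes $M$ and $N$ into $\Hscr\p1$ via \cite[Lemma 2.38]{J79} and then does get $M\p{T_n}N\p{S_n}\in\Hscr\p1(\Gbb)$ from independence of the suprema.)

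For (i) the paper does not give an argument at all; it simply cites \cite[Theorem 1]{WG82}. Your direct proof via \eqref{eq:planid} applied at $t+$ (implicitly using backward-martingale convergence along $u\downarrow t$) is the standard route, and you correctly isolate its weak point: from $1_A=\Ebb[1_A\mid\Gscr_t]$ a.s.\ one obtains $\Gscr_{t+}\subseteq\Gscr_t$ only modulo $\Pbb$-null sets. You patch this by invoking completeness of $\Fbb$ and $\Hbb$, but note that the paper's standing hypotheses only require $\Fbb,\Hbb$ to be right-continuous, not complete; so strictly speaking your patch imports an assumption the paper has not made. This does not invalidate your approach---it is the same issue any conditional-expectation proof of (i) faces---but it is worth being aware that the paper sidesteps the point by citation rather than resolving it.
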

\begin{proof}
For (i) we refer to \cite[Theorem 1]{WG82}. To see (ii), we first verify the statement for uniformly integrable martingales. Let therefore $M$ be an $\Fbb$-martingale (the proof for $\Hbb$ is the same). By the independence assumption, we have that $\sig(M_t)\vee\Fscr_s$ is independent of $\Hscr_s$, for every $0\leq s\leq t$. Hence, we can compute
\[
\Ebb[M_t|\Gscr_s]=\Ebb[M_t|\Fscr_s\vee\Hscr_s]=\Ebb[M_t|\Fscr_s]=M_s.
\] 
If now $M$ is an $\Fbb$-local martingale and $(T_n)_{n\geq1}$ is a sequence of $\Fbb$-stopping times localizing $M$ to an $\Fbb$-martingale, by the previous step, we see that the stopped process $M\p{T_n}$ is a $\Gbb$-martingale, for every $n$. Hence, $(T_n)_{n\geq1}$ being also a sequence of $\Gbb$-stopping times, we deduce that $M$ is a $\Gbb$-local martingale. The proof of (ii) is complete. To see (iii) we again verify the statement first for martingales. Let $M$ be an $\Fbb$-martingale and let $N$ be a $\Hbb$-martingale. Since, by the independence assumption, we have that $\Ebb[|M_tN_t|]<+\infty$ and that $\sig(M_t)\vee\Fscr_s$ is independent of $\Hscr_t$, for every $0\leq s\leq t$, we can compute
\[
\Ebb[M_tN_t|\Gscr_s]=\Ebb[\Ebb[M_t|\Fscr_s\vee\Hscr_t]N_t|\Gscr_s]=M_s\Ebb[N_t|\Fscr_s\vee\Hscr_s]=M_sN_s,
\]
where, in the last identity, we used that $\sig(N_t)\vee\Hscr_s$ is independent of $\Fscr_s$. Let us now assume that $M$ is an $\Fbb$-local martingale and that $N$ is a $\Hbb$-local martingale. Let $(T_n)_{n\geq1}$ be a sequence of $\Fbb$-stopping times localizing $M$ to $\Hscr\p1(\Fbb)$ and let $(S_n)_{n\geq1}$ be a sequence of $\Hbb$-stopping times localizing $N$ to $\Hscr\p1(\Hbb)$ (we observe that we can always find such sequences of stopping times because of \cite[Lemma 2.38]{J79}). By the previous step, we know that $M\p{T_n}N\p{S_n}$ is a $\Gbb$-martingale. Furthermore, we have $M\p{T_n}N\p{S_n}\in\Hscr\p1(\Gbb)$, since $\Ebb[\sup_{t\geq0}|M\p{T_n}_tN\p{S_n}_t|]=\Ebb[\sup_{t\geq0}|M\p{T_n}_t|]\Ebb[\sup_{t\geq0}|N\p{S_n}_t|]<+\infty$ holds, by the independence assumption. We now define $R_n:=T_n\wedge S_n$, $n\geq1$. Then, $R_n\uparrow+\infty$, for $n\rightarrow+\infty$. Using Doob's stopping theorem, we deduce that $(MN)\p{R_n}=\big((M\p{T_n}N\p{S_n})\p{S_n}\big)\p{T_n}$ belongs to $\Hscr\p1(\Gbb)$, for every $n\geq1$. Hence, $MN$ is a $\Gbb$-local martingale. The proof is complete. 
\end{proof}


\begin{thebibliography}{14}
\providecommand{\natexlab}[1]{#1}
\providecommand{\url}[1]{\texttt{#1}}
\expandafter\ifx\csname urlstyle\endcsname\relax
  \providecommand{\doi}[1]{doi: #1}\else
  \providecommand{\doi}{doi: \begingroup \urlstyle{rm}\Url}\fi

\bibitem[Aksamit and Jeanblanc(2017)]{AJ17}
A.~Aksamit and M.~Jeanblanc.
\newblock \emph{Enlargement of filtration with finance in view}.
\newblock Springer, 2017.

\bibitem[Amendinger(2000)]{A00}
J.~Amendinger.
\newblock Martingale representation theorems for initially enlarged
  filtrations.
\newblock \emph{Stoch. Proc. Appl.}, 89\penalty0
  (1):\penalty0 101--116, 2000.

\bibitem[Barlow(1978)]{Ba78}
M.~T. Barlow.
\newblock Study of filtration expanded to include an honest time.
\newblock \emph{Z. Wahrscheinlichkeitstheorie verw. Gebiete}, 44:\penalty0
  307--323, 1978.

\bibitem[Callegaro et~al.(2013)Callegaro, Jeanblanc, and Zargari]{CJZ13}
G.~Callegaro, M.~Jeanblanc, and B.~Zargari.
\newblock Carthaginian enlargement of filtrations.
\newblock \emph{ESAIM: Probab. Stat.}, 17:\penalty0 550--566, 2013.

\bibitem[Calzolari and Torti(2016)]{CT16}
A.~Calzolari and B.~Torti.
\newblock Enlargement of filtration and predictable representation property for
  semi-martingales.
\newblock \emph{Stochastics}, 88\penalty0 (5):\penalty0 680--698, 2016.

\bibitem[Calzolari and Torti(2019)]{CT19}
A.~Calzolari and B.~Torti.
\newblock Martingale representations in progressive enlargement by the
  reference filtration of a semi-martingale: a note on the multidimensional
  case.
\newblock \emph{Stochastics}, 91\penalty0 (2):\penalty0 265--287, 2019.

\bibitem[Dellacherie and Meyer(1982)]{DM82}
C.~Dellacherie and P.-A. Meyer.
\newblock \emph{Probabilities and {Potential}{ A} and {B}}.
\newblock North-Holland Publishing Company, 1982.

\bibitem[Di~Tella(2019)]{DT19}
P.~Di~Tella.
\newblock On the weak representation property in progressively enlarged
  filtrations with an application in exponential utility maximization.
\newblock \emph{Stoch. Proc. Appl.}, 2019.

\bibitem[El~Karoui et~al.(2010)El~Karoui, M., and Jiao]{EJJ10}
N.~El~Karoui, Jeanblanc M., and Y.~Jiao.
\newblock What happens after a default: the conditional density approach.
\newblock \emph{Stoch. Proc. Appl.}, 120\penalty0
  (7):\penalty0 1011--1032, 2010.

\bibitem[He et~al.(1992)He, J., and Yan]{HWY92}
S.~He, Wang J., and J.~Yan.
\newblock \emph{Semimartingale theory and stochastic calculus}.
\newblock Taylor \& Francis, 1992.

\bibitem[Jacod(1979)]{J79}
J.~Jacod.
\newblock \emph{Calcul stochastique et probl{\`e}mes de martingales.}
\newblock Springer, 1979.

\bibitem[Jacod and Shiryaev(2003)]{JS00}
J.~Jacod and A.~Shiryaev.
\newblock \emph{Limit theorems for stochastic processes}.
\newblock Springer, 2003.

\bibitem[Wu and Gang(1982)]{WG82}
H.~S. Wu and W.~J. Gang.
\newblock The property of predictable representation of the sum of independent
  semimartingales.
\newblock \emph{Z. Wahrscheinlichkeitstheorie verw. Gebiete}, 61\penalty0
  (1):\penalty0 141--152, 1982.

\bibitem[Xue(1993)]{X93}
X.~Xue.
\newblock A martingale representation theorem for two independent
  semimartingales.
\newblock \emph{Stochastics}, 42\penalty0 (3-4):\penalty0 225--228, 1993.

\end{thebibliography}
\end{document}